\renewcommand{\tocsection}[3]{%
  \indentlabel{\@ifnotempty{#2}{\bfseries\ignorespaces#1 #2\quad}}\bfseries#3}
\newcommand\bes{\begin{eqnarray}}
\newcommand\ees{\end{eqnarray}}
\newcommand\R{\mathbb R}
\newtheorem{theorem}{Theorem}[section]
\newtheorem{lemma}[theorem]{Lemma}
\newtheorem{corollary}[theorem]{Corollary}
\newtheorem{remark}[theorem]{Remark}
\numberwithin{equation}{section}
\theoremstyle{plain}
\newtheorem*{theorem*}{Theorem A}
\newcommand\bess{\begin{eqnarray*}}
	\newcommand\eess{\end{eqnarray*}}
\newcommand{\lf}{\left}
\newcommand{\rr}{\right}
\newcommand{\dd}{\displaystyle}
\newcommand{\td}{\tilde}
\newcommand\yy{\infty}
\newcommand{\ol}{\overline}
\newcommand{\rd}{{\rm d}}
\begin{document}

\title[Dynamics of an invading competitor]{Long-time dynamics of a competition model with\\  nonlocal diffusion and free boundaries: \\ Chances of  successful invasion}
\author[Y. Du, W. Ni, L. Shi ]{Yihong Du$^\dag$, Wenjie Ni$^\dag$ and Linfei Shi$^{\ddag}$}
\thanks{\hspace{-.5cm}
$^\dag$ School of Science and Technology, University of New England, Armidale, NSW 2351, Australia.
\\
$^{\ddag}$ School of Mathematics and Statistics, Beijing Institute of Technology, Beijing 100081, China.
\\
\mbox{\ \  Emails:} ydu@une.edu.au (Y. Du),\ wni2@une.edu.au (W. Ni),\ 3120205702@bit.edu.cn (L. Shi)}

\date{\today}

\begin{abstract}

This is a continuation of our work \cite{dns-part1} to investigate the long-time dynamics of a two species competition model of Lotka-Volterra type with nonlocal diffusions, where the territory (represented by the real line $\R$) of a native species with density $v(t,x)$,  is invaded by a competitor with density $u(t,x)$, via two fronts, $x=g(t)$ on the left and $x=h(t)$ on the right. So the population range of $u$ is the evolving interval $[g(t), h(t)]$ and the reaction-diffusion equation for $u$ has two free boundaries, with $g(t)$  decreasing in $t$ and $h(t)$ increasing in $t$. Let $h_\infty:=h(\infty)\leq \infty$ and $g_\infty:=g(\infty)\geq -\infty$.
In  \cite{dns-part1}, we obtained detailed descriptions of the long-time dynamics of the model according to whether $h_\infty-g_\infty$ is $\infty$ or finite.
In the latter case, we demonstrated in what sense the invader $u$ vanishes in the long run and $v$ survives the invasion, while in the former case, we obtained a rather satisfactory description of the long-time asymptotic limits of $u(t,x)$ and $v(t,x)$ when the parameter $k$ in the model is less than 1. In the current paper,  we obtain sharp criteria  to distinguish the case $h_\infty-g_\infty=\infty$ from the case $h_\infty-g_\infty$ is  finite. Moreover, for the case $k\geq 1$ and $u$ is a weak competitor, we obtain biologically meaningful conditions that guarantee the vanishing of the invader $u$, and reveal chances  for $u$ to invade successfully. In particular, we demonstrate that both $h_\infty=\infty=-g_\infty$ and $h_\infty=\infty$ but $g_\infty$ is finite are possible; the latter seems to be the first example for this kind of population models, with either local or nonlocal  diffusion.

\bigskip

\noindent \textbf{Keywords}: Nonlocal diffusion; Free
boundary; Competition.
\medskip

\noindent\textbf{AMS Subject Classification (2000)}: 35K57,
35R20

\end{abstract}

\maketitle
%\tableofcontents

\section{Introduction}
We continue our work  \cite{dns-part1} on  the following Lotka-Volterra type competition model with nonlocal diffusion and free boundaries
\begin{align}\label{KnK1.2}
	\begin{cases}
	\dd	u_t = d_1\int_{g(t)}^{h(t)}J_1(x - y)u(t, y)\rd y - d_1u + u(1 - u - kv),   &
		t > 0, ~g(t) < x < h(t),\\[4mm]
\dd		v_t = d_2\int_\mathbb{R}J_2(x - y)v(t, y)\rd y - d_2v + \gamma v(1 - v - hu),   &
		t > 0,~x\in\mathbb{R},\\[3mm]
		u(t, x) = 0,  &
		t > 0,\ x \not\in (g(t), h(t)),
		\\[2mm]
	\dd	h^{\prime} (t) = \mu\int_{g(t)}^{h(t)}\int_{h(t)}^{\infty}J_1(x - y)u(t, x)\rd y\rd x,  &
		t > 0,\\[4mm]
	\dd	g^{\prime} (t) = -{\mu}\int_{g(t)}^{h(t)}\int_{-\infty}^{g(t)}J_1(x - y)u(t, x)\rd y\rd x,  &
		t > 0,\\[3mm]	
		h(0) = -g(0) = h_0 > 0,\ u(0, x) = u_0(x), \
		 v(0, x) = v_0(x),
		&
		x \in\mathbb{R},
	\end{cases}
\end{align}
where $d_1, d_2, h, k, \gamma, \mu$ are given positive constants, and the initial functions satisfy 
\begin{align}\label{KnK1.3}\begin{cases}
		u_{0} \in C(\R), ~u_{0}(x) = 0 ~ {\rm for}~|x|\geq h_0, ~ u_{0}(x) > 0  ~ {\rm for}~|x|< h_0,\\
		v_0\in C_b(\mathbb{R}), ~ v_{0}(x) \geq 0,\ v_0(x)\not\equiv 0 {\rm ~in~} \mathbb{R},
		\end{cases}
\end{align}
where $C_b(\mathbb{R})$ is the space of continuous and bounded functions in $\mathbb{R}.$ 

We assume that the kernel functions $J_1$ and $ J_2$ satisfy\smallskip

\noindent$(\mathbf{J}):$  \ \ $\displaystyle J_i\in C_b(\mathbb{R})$, $J_i(x)=J_i(-x) \geq 0,$ $J_i(0) > 0,$ $\displaystyle\int_{\mathbb{R}}J_i(x)dx = 1$ for $i=1,2$.

\medskip

Under these assumptions,  it is known that system \eqref{KnK1.2} has a unique solution $(u, v, g, h)$ defined for all $t > 0$ (see \cite{CLWZ}). Moreover,
 \[
 \mbox{ $g_\yy:=\lim_{t\to\yy}g (t)\in [-\infty, -h_0)$ and $h_\yy:=\lim_{t\to\yy}h(t)\in (h_0, \infty]$}
 \]
  always exist. 
 
 In \cite{dns-part1},  the long-time dynamics of \eqref{KnK1.2} are described according to the following two cases:
\[
{\bf (a)\!:} \  h_\infty-g_\infty<\infty,\ \ {\bf (b)\!:} \ h_\infty-g_\infty=\infty.
\]
For case {\bf (a)}, we have proved  the following result. \medskip

\noindent {\bf Theorem A.} {\it  Assume that $(\mathbf{J})$  holds and $(u, v, g, h)$ is the unique solution of \eqref{KnK1.2}.  If  $h_\infty - g_\infty < \infty$, then necessarily
	\begin{align}\label{3.3}
	d_1>1-k \mbox{ and }  \lambda_p(\mathcal{L}_{(g_\infty, h_\infty)})  \leq k-1.
	\end{align}
	moreover
	\begin{equation}\label{vanishing}\begin{cases}
	\dd\lim_{t\to\infty}\int_{\R}u(t,x)dx=0,\\[2mm]
	\dd\lim_{t\to\infty}\int_L^L|v(t,x)-1)|dx=0 \mbox{ for every } L>0,\\[2mm]
	\dd\lim_{t\to\infty} v(t,x)=1 \mbox{ locally uniformly for } x\in\R\setminus (g_\infty, h_\infty).
	\end{cases}
	\end{equation}
}

Whether \eqref{vanishing} in Theorem A can be strengthened to
\begin{equation}\label{3.4}
		\begin{aligned}
			&\lim\limits_{t \rightarrow \infty}\max\limits_{x\in[g(t), h(t)]} u(t, x) = 0\ \mbox{ and } \  \lim\limits_{t \rightarrow \infty}v(t, x) = 1 {\rm ~locally ~uniformly ~ for~} x\in\mathbb{R}
		\end{aligned}
	\end{equation}
 was partially answered in \cite{dns-part1} (see Theorem 1.2 there).
\medskip

For case {\bf (b)}, we have obtained in \cite{dns-part1} the following conclusion. 
\medskip

\noindent {\bf Theorem B.} {\it
Assume that $(\mathbf{J})$  holds and $(u, v, g, h)$ is the unique solution of \eqref{KnK1.2}.
	If  $h_\yy-g_\yy=\yy$ and $k<1$, then   $h_\infty = \infty,$ $g_\infty = -\infty$
	and
	\[
	\lim_{t\to\infty}(u(t,x), v(t,x))=\begin{cases} (1,0) &\mbox{ if } h\geq 1,\\
	(\frac{1-k}{1-hk},\frac{1-h}{1-hk}) &\mbox{ if } h<1,
	\end{cases}
	\]
	where the convergence is locally uniform for $x\in\R$.
}
\medskip

Let us recall that, in \eqref{3.3}, $\lambda_p(\mathcal{L}_{(g_\infty, h_\infty)})$ denotes the principal eigenvalue  of   the
 following eigenvalue problem
\begin{equation}\label{eigen}
	\begin{aligned}
\lambda\varphi=		\mathcal{L}_{\Omega} [\varphi](x): = d_1\left[\int_{\Omega}J_1(x - y)\varphi(y)\rd y - \varphi(x)\right] , ~\varphi\in C(\overline{\Omega}),
	\end{aligned}
\end{equation}
with $\Omega=(g_\infty, h_\infty)$. It is well known that,
 under our assumption ({\bf J}), for any finite interval $\Omega$,
\eqref{eigen}  has a unique principal eigenvalue $\lambda=\lambda_p( \mathcal{L}_{\Omega})$ associated with a positive eigenfunction $\varphi$ (e.g. \cite{BCV, C, LCW}), and
 it  has the following properties:\medskip

\noindent {\bf Proposition C.} (\!\!\cite[Proposition 3.4]{Cao-2019-JFA}){\it \
Assume that $l > 0$, and $J_1$ satisfies $(\mathbf{J})$.  Then 
	
	{\rm (i)} $\lambda_p(\mathcal{L}_{(a, a+l)})=\lambda_p(\mathcal{L}_{(0, l)})$  for all  $ a\in\R$,
	
	{\rm(ii)} $\lambda_p(\mathcal{L}_{(0, l)})$ is  strictly increasing and continuous in $l$,
	
	{\rm(iii)} $\lim\limits_{l \rightarrow \infty}\lambda_p(\mathcal{L}_{(0, l)}) = 0$,
	
	{\rm(iv)} $\lim\limits_{l \rightarrow 0}\lambda_p(\mathcal{L}_{(0, l)}) =- d_1$.}

\medskip

\noindent
Therefore, for every  $\sigma\in (0, d_1)$,  there exists a unique $l_\sigma> 0$   such that 
\[
	\lambda_p\left(\mathcal{L}_{(0,l_\sigma)}\right)=-\sigma.
\]
\bigskip

We are now ready to  describe our results in this paper. Firstly, we  examine exactly when $h_\infty-g_\infty<\infty$ and $h_\infty-g_\infty=\infty$, respectively, happens. Then we  focus on the situation that $u$ is a weak competitor ($k\geq 1>h$) and 
reveal some interesting phenomena; in particular, we will find conditions for $u$ to invade successfully, with $h_\infty=\infty,\ g_\infty=-\infty$, as well as with $h_\infty=\infty$ and $g_\infty$  finite.

By Theorem A, the fact $h_\infty-g_\infty<\infty$ implies that, $\int_{g(t)}^{h(t)}u(t,x)dx$,  the total population of $u$ at time $t$, converges to 0 as $t\to\infty$, so the invading competitor $u$ vanishes in the long run. We will call this the {\bf vanishing} (of $u$) case. 

The indentity $h_\infty-g_\infty=\infty$ means that the size of the population range of $u$ at time $t$, given by $h(t)-g(t)$, converges to $\infty$ as $t\to\infty$, and we will call this the {\bf spreading} (of $u$) case.  Theorem B gives a precise description for the population densities $u(t,x)$ and $v(t,x)$ in this case when $k<1$. We will demonstrate below that when $k\geq 1$, more complicated dynamics may  arise (see Theorems \ref{th1.2}, \ref{v-1}, \ref{th1.3} and \ref{th1.4}).
\medskip

To describe the criteria governing spreading and vanishing (of $u$), we will regard $\mu$ as a parameter in certain situations. 
	
\begin{theorem}\label{th1.1} Suppose that {\rm ({\bf J})} holds. Then the following conclusions are valid:
\begin{itemize}
	\item[{\rm (i)}]  If $k<1$ and $d_1 \leq  1 - k$, then  we always have $h_\infty-g_\infty=\infty$.
	
	\item[{\rm (ii)}]  If $k<1$, $d_1 > 1 - k$ and $2h_0\geq l_{1-k}$, then again $h_\infty-g_\infty=\infty$ always holds.
	
	\item[{\rm (iii)}]  If  $k<1$, $d_1 > 1 - k$ and $2h_0< l_{1-k}$, then there exists $\mu_*\in [0,\infty)$, depending on  $(u_0, v_0)$, such that
	$h_\infty-g_\infty=\infty$ exactly when $\mu>\mu_*$; moreover, 
	 $\mu_*>0$ if $1<d_1$ and $2h_0< l_1$.
	 \item[\rm (iv)] If $k\geq 1$, then there exists $\mu_*\in [0,\infty]$, depending on  $(u_0, v_0, h_0)$, such that
	$h_\infty-g_\infty=\infty$ exactly when $\mu>\mu_*$; moreover, $\mu_*\in (0, \infty]$ when $k>1>h$.
\end{itemize}
\end{theorem}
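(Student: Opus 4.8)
The backbone of the argument is a comparison principle for \eqref{KnK1.2}, together with the threshold results for scalar nonlocal free-boundary problems (of the Cao--Du--Li--Zhou type), which give a spreading-vanishing dichotomy governed by the size of the initial interval relative to $l_\sigma$ for the appropriate growth rate $\sigma$. The plan is first to produce a subsolution for the $u$-equation that ignores the competition term in the worst allowable way: since $v$ is bounded and, by the parabolic comparison in the second equation, $\limsup_{t\to\infty}\|v(t,\cdot)\|_\infty\le 1$, one has, for any $\epsilon>0$ and all large $t$, the differential inequality
\begin{equation*}
u_t\ge d_1\int_{g(t)}^{h(t)}J_1(x-y)u(t,y)\,\rd y-d_1u+u\bigl(1-k-\epsilon-u\bigr)\quad\text{for }g(t)<x<h(t).
\end{equation*}
Thus $u$ dominates the solution $\underline u$ of the scalar nonlocal free-boundary problem with intrinsic growth rate $1-k-\epsilon$ on the same initial data. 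When $k<1$ this rate is positive, and the scalar theory then says: if $d_1\le 1-k-\epsilon$, or if $d_1>1-k-\epsilon$ but $2h_0\ge l_{1-k-\epsilon}$, then $\underline u$ spreads; letting $\epsilon\downarrow 0$ (and using continuity of $\sigma\mapsto l_\sigma$ from Proposition C(ii)) yields parts (i) and (ii). The only subtlety here is the boundary case $2h_0=l_{1-k}$ in (ii), where one must instead use that strict monotonicity of $\mu\mapsto(h(t)-g(t))$ and the strong maximum principle push the solution strictly above the vanishing threshold for any $\mu>0$.

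For part (iii) I would fix $k<1$, $d_1>1-k$, $2h_0<l_{1-k}$ and study the dependence on $\mu$. Monotonicity in $\mu$ is the key structural fact: by the comparison principle, increasing $\mu$ increases $h(t)$, decreases $g(t)$, and (after another comparison) does not decrease $u$; hence the set of $\mu$ for which spreading occurs is an interval $(\mu_*,\infty)$ or $[\mu_*,\infty)$, with $\mu_*\in[0,\infty)$ — finiteness of $\mu_*$ following because for $\mu$ large the free boundaries expand so fast that $h(t)-g(t)$ overtakes $l_{1-k-\epsilon}$ in finite time, after which the scalar spreading criterion applies to a suitable subsolution. That the dichotomy is \emph{exactly} at $\mu_*$ (open endpoint) is the standard continuity-in-$\mu$ argument: if spreading fails at $\mu_*$ then $h_\infty-g_\infty<\infty$, and a small increase of $\mu$ perturbs the orbit continuously on the finite time needed to exceed the threshold length, a contradiction. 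Finally, to get $\mu_*>0$ when $1<d_1$ and $2h_0<l_1$, I would construct, for small $\mu$, a \emph{supersolution} of the $u$-equation by dropping $-kv$ entirely: $\overline u$ solves the scalar problem with growth rate $1$ and small $\mu$; since $2h_0<l_1$, the scalar vanishing criterion gives $\overline h_\infty-\overline g_\infty<\infty$ for all small $\mu$, whence the same for $u$, i.e. vanishing — so $\mu_*>0$.

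Part (iv) is where the real obstacle lies, because when $k\ge 1$ the crude subsolution above has nonpositive growth rate $1-k\le 0$ and gives nothing; one cannot decouple $u$ from $v$ so cheaply. The monotonicity-in-$\mu$ argument still shows the spreading set is an interval $(\mu_*,\infty)$ with $\mu_*\in[0,\infty]$, now with $\infty$ genuinely allowed. The substantive claims are $\mu_*>0$ and $\mu_*<\infty$ (equivalently $\mu_*\in(0,\infty]$ — so only $\mu_*>0$ is asserted) under $k>1>h$. For $\mu_*>0$ I would again build a supersolution for $u$: with $h<1$ the native species persists near $1$, so near the (slowly moving) boundaries $v$ is close to $1$ and the term $u(1-k-kv+\cdots)$ is strongly negative there; combined with small $\mu$ this confines $u$ and forces $h_\infty-g_\infty<\infty$. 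Making this rigorous is delicate: one needs a lower bound on $v$ on the moving interval $[g(t),h(t)]$ that does not degenerate, which requires a separate argument showing that when $\mu$ is small the invader $u$ stays small (a bootstrap between the two equations), typically run by constructing a pair of ordered super/sub-solutions for the full system or by a contradiction/compactness argument à la \cite{dns-part1}. I expect this coupled estimate — quantitative persistence of $v$ on the free-boundary interval under small $\mu$ — to be the main technical hurdle; everything else reduces to comparison, the scalar dichotomy, and continuity/monotonicity in $\mu$.
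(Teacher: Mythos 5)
Your plan is structurally sound and its main mechanisms (comparison with scalar nonlocal free-boundary problems, monotonicity in $\mu$, supersolution construction for small $\mu$, a coupled persistence estimate for $v$ when $k>1>h$) are the right ones, but it diverges from the paper's proof in an instructive way and has two places that need repair.

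\textbf{Parts (i) and (ii): a genuinely different route.} The paper does \emph{not} invoke the scalar spreading-vanishing dichotomy here. Instead it argues by contradiction from Theorem~A: if $h_\infty-g_\infty<\infty$, then necessarily $d_1>1-k$ and $\lambda_p(\mathcal{L}_{(g_\infty,h_\infty)})\le k-1$. Part~(i) is the literal contrapositive of the first condition; for part~(ii), since $h_\infty-g_\infty>2h_0\ge l_{1-k}$ and $\lambda_p$ is strictly increasing in interval length, one gets $\lambda_p(\mathcal{L}_{(g_\infty,h_\infty)})>k-1$, a contradiction. This is cleaner than your subsolution argument and has no $\epsilon$-loss at the endpoints. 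Your approach can be made to work, but your proposed repair at $2h_0=l_{1-k}$ is mis-stated: the relevant strict monotonicity is of $t\mapsto h(t)-g(t)$ (which is strictly increasing for $t>0$ because $u>0$ in $(g(t),h(t))$), not of $\mu\mapsto h(t)-g(t)$; after any $t_0>0$ one has $h(t_0)-g(t_0)>l_{1-k}$ strictly and can then afford the $\epsilon$-loss. As written, monotonicity in $\mu$ gives no purchase on a fixed trajectory. (At $d_1=1-k$ in (i) your argument is fine, since $l_{1-k-\epsilon}\to 0$ as $\epsilon\to 0^+$ when $d_1=1-k$, so small $\epsilon$ works for any $h_0>0$.)

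\textbf{Part (iii): essentially the same route.} Monotonicity in $\mu$, finiteness of $\mu_*$ by driving $h(t_0)-g(t_0)$ past $l_{1-k}$ for large $\mu$ (the paper cites a lemma from \cite{DY-2022-DCDS}), the open-endpoint classification via a continuity-in-$\mu$ argument (the paper cites \cite[Lemma 3.14]{Cao-2019-JFA}), and a decay supersolution $\bar u= Me^{-\delta t}\phi_\epsilon$ for small $\mu$ when $d_1>1$ and $2h_0<l_1$ — your account matches the paper's in substance.

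\textbf{Part (iv): right idea, but the load-bearing step is left as a `hurdle.'} You are correct that the subsolution degenerates when $k\ge 1$, and correct that the substantive point is to show $v$ persists near $1$ on a neighbourhood of the invaded region for small $\mu$. But your proposal stops exactly at the part that carries the weight. The paper's Lemma~\ref{lem:u-inf} does this concretely: compare $(u,v)$ with the solution $(\bar u,\underline v)$ of the \emph{Cauchy problem} (no free boundaries) for the same competition system, using Lemma~\ref{lemmacomp2}(iii); then apply the known iteration argument (from \cite{Zwy-2022-dcds-b}) showing that, for $k>1>h$, the Cauchy problem has $\bar u\to 0$ and $\underline v\to 1$ locally uniformly. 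This yields $v(t,x)\ge\frac{1+k}{2k}$ on $[-2h_0,2h_0]$ for all large $t$, which is then fed into the same supersolution construction used in (iii). Identifying this reduction — replace free boundaries with the whole line, where the competition system's long-time behaviour is already understood — is the missing ingredient that turns your sketch into a proof.
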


The above results indicate that $1-k$ serves as a critical diffusion rate for $u$: If its diffusion rate $d_1\leq 1-k$, then successful invasion is guaranteed regardless of the choice of the initial data $(u_0, v_0, h_0)$ as long as they are admissible, namely satisfying \eqref{KnK1.3}; if $d_1>1-k>0$, then the size of the initial population range $2h_0$ becomes crucial and $l_{1-k}$ is a critical value for this initial size, and successful invasion is guaranteed when $2h_0\geq l_{1-k}$. When $k<1$, if both the diffusion rate and the size of the initial population range of $u$ are below their respective critical values, then the value of the parameter $\mu$ becomes important, and there exists a critical value $\mu_*$, depending on $(u_0, v_0)$, such that the invasion is successful if and only if $\mu>\mu_*$. Similarly, when $k\geq 1$, there exists a critical value $\mu_*$, depending on $(u_0, v_0, h_0)$, such that the invasion is successful if and only if $\mu>\mu_*$.
\bigskip

Next we regard $\mu>0$ as a fixed given constant, and look for biologically meaningful sufficient conditions guaranteeing vanishing and spreading of $u$, respectively.
We will focus on the weak-strong competition case with  $u$ the weak competitor, namely
\[
k>1>h.
\]
In some of our results, $k=1$ is also allowed.

Similar to the corresponding local diffusion model considered in \cite{DuLin} (see Theorem 3.3 there), 
 the invasion of the weak competitor $u$ will definitely fail if  the native species $v$ is already well established at time $t=0$, namely 
\begin{equation}\label{v0->0}
\inf_{x\in\R} v_0(x)>0.
\end{equation}
Indeed, following the proof of  Theorem 3.3 in \cite{Zwy-2022-dcds-b}, one can easily show the following result:\medskip

\noindent
{\bf Proposition D.} 
{\it If $v_0$ satisfies \eqref{v0->0} and $k>1>h$, then $h_\infty-g_\infty<\infty$, and as $t\to\infty$, 
\[\begin{cases}\mbox{$u(t,x)\to 0$ uniformly for $x\in [g(t), h(t)]$,}\\
\mbox{$v(t,x)\to 1$  uniformly for $x\in\R$. }
\end{cases}\]
}
\smallskip

Another situation that  the invasion of $u$  always fails is when
 $u$ has the same dispersal strategy  and the same growth rate to the stronger native species $v$,  as described in the following result. 

\begin{theorem}\label{th1.2}
	If  $k\geq 1>h$,  $d_1=d_2=d$, $\gamma=1$, $J_1=J_2=J$ and
	\begin{align}\label{thin-tail}
	\dd 
	\int_0^\infty \!\! e^{\lambda x}J(x)dx <\yy\ \ \ {\rm for\ some }\ \lambda>0,
\end{align}
 then for any   $(u_0, v_0)$ satisfying \eqref{KnK1.3}, $h_\infty-g_\infty<\infty$ and as $t\to\infty$, 
 \[\begin{cases}\mbox{$u(t,x)\to 0$ uniformly for $x\in [g(t), h(t)]$,}\\
\mbox{$v(t,x)\to 1$ locally uniformly for $x\in\R$. }
\end{cases}\]
\end{theorem}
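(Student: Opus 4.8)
The plan is to exploit the symmetry $d_1=d_2=d$, $\gamma=1$, $J_1=J_2=J$ so as to compare the invader $u$ with the native $v$ directly, and then use the thin-tail condition \eqref{thin-tail} to rule out spreading. The starting observation is that when the two species have identical dispersal and growth mechanisms, the combined population $w:=u+v$ and the difference, or rather the two densities themselves, satisfy comparison-friendly inequalities because $k\geq 1$ and $h<1$. Concretely, I would first show that $v(t,x)\geq \underline v(t,x)$ for all large $t$, where $\underline v$ solves the scalar nonlocal logistic-type problem $\underline v_t=d\int_\R J(x-y)\underline v\,dy-d\underline v+\underline v(1-\underline v-h\,\|u\|_\infty)$; but more usefully, since $h<1$ and $u$ is bounded (say $u\le M$ by a standard a priori bound, with $M$ depending only on the data and $k$), $v$ is bounded below away from zero on compact sets after a finite time, and in fact $\liminf_{t\to\infty}v(t,x)>0$ locally uniformly. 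The real point, though, is a pointwise comparison between $u$ and a suitable supersolution built from $v$: because $k\ge 1$, in the region where $u,v>0$ we have $1-u-kv\le 1-u-v\le 1-v-hu$ (using $h\le 1$), so $u$ is a subsolution of the very equation satisfied by $v$ on the moving domain, with the crucial caveat that the domains differ. I would make this precise by comparing $u$ on $[g(t),h(t)]$ with the solution $\tilde v$ of the $v$-equation on all of $\R$ having initial data $\ge u_0$ (possible since $v_0\not\equiv 0$ and by waiting a finite time, using the strong maximum principle / positivity for nonlocal equations from \cite{dns-part1} or \cite{CLWZ}), obtaining $u(t,x)\le \tilde v(t,x)$ on $[g(t),h(t)]$.

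The second ingredient is to convert this domination into a bound on the front speeds. The free boundary conditions give $h'(t)=\mu\int_{g(t)}^{h(t)}\int_{h(t)}^\infty J(x-y)u(t,x)\,dy\,dx$, and the inner integral $\int_{h(t)}^\infty J(x-y)\,dy$ decays like the tail of $J$ as $h(t)-x\to\infty$. Under \eqref{thin-tail}, this tail is exponentially small, so to get $h_\infty=\infty$ one needs $u(t,x)$ to remain bounded below near the right front $x=h(t)$ — i.e. a genuine travelling-wave-like profile must form. But $u\le \tilde v$, and $\tilde v$ is the solution of a nonlocal Fisher–KPP equation whose propagation, under the thin-tail assumption \eqref{thin-tail}, occurs at a finite asymptotic speed $c^*$ (this is the classical Carr–Chmaj / Xu–Li–Coville type result for nonlocal monostable equations; it is precisely to invoke this that the thin-tail hypothesis is imposed). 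The heuristic is then: if $h(t)\to\infty$, the front $h(t)$ would have to move no faster than some rate tied to $\mu$ and the profile of $u$ there, while $u$ is squeezed under a front $\tilde v$ moving at speed $c^*$; one shows these are incompatible unless the total mass of $u$ near the front collapses, forcing $h_\infty<\infty$. I expect the cleanest route is contradiction: assume $h_\infty-g_\infty=\infty$; by Theorem A's contrapositive this is the ``spreading'' regime, and by the structure of the problem (as in \cite{dns-part1}) one would have $\liminf_{t\to\infty}u(t,x)>0$ on expanding intervals. Combine this with $u\le\tilde v$ and the sharp spreading-speed characterization for $\tilde v$ together with a comparison showing $v$ nevertheless also stays bounded below (so the competition term $-kv$ in the $u$-equation is not negligible), to derive that $u$ is in fact driven to $0$, contradicting $\liminf u>0$.

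The concrete order of steps: (1) a priori bounds $0\le u\le M$, $0\le v\le M$, and $\limsup_t\max u\le 1$, $\limsup_t v\le 1$ via scalar comparison; (2) positivity/comparison to get $u(t_0+\cdot,\cdot)\le \tilde v$ on $[g,h]$ for suitable finite $t_0$, using $k\ge1\ge h$; (3) invoke the finite-speed spreading result for the nonlocal Fisher–KPP equation satisfied by $\tilde v$ under \eqref{thin-tail}; (4) simultaneously obtain $\liminf_t v>0$ locally uniformly, hence the term $-kv$ in the $u$-equation gives $u_t\le d(J*u-u)+u(1-u-k\delta)$ near the fronts for some $\delta>0$ after large time; (5) a tail estimate on $h'(t),\ -g'(t)$ using \eqref{thin-tail} plus the exponential decay of $u$ near the fronts (obtained from (4) and a comparison with an exponentially decaying supersolution on $[g(t),h(t)]$), to show $\int_0^\infty(h'+|g'|)\,dt<\infty$, i.e. $h_\infty-g_\infty<\infty$; (6) then Theorem A immediately gives the vanishing of $u$ and $v\to1$. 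The main obstacle is Step (5) — rigorously transferring ``$v$ stays positive near the front'' into ``$u$ decays exponentially near the front'', because the front $x=h(t)$ is itself moving and one needs the comparison supersolution on the moving domain $[g(t),h(t)]$ to be compatible with the nonlocal operator and with the (small, by \eqref{thin-tail}) boundary flux; constructing that supersolution and checking the boundary inequalities is where the thin-tail condition \eqref{thin-tail} is really consumed, and where the bulk of the technical work lies. Steps (1)–(4) are standard given the machinery of \cite{dns-part1} and \cite{CLWZ}.
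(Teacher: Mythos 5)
Your Steps (1)--(3) and (6) match the paper's scaffolding, and you have correctly identified that the symmetry $d_1=d_2=d$, $\gamma=1$, $J_1=J_2=J$ together with $k\ge 1>h$ permits a direct comparison of $u$ against a multiple of $v$, and that \eqref{thin-tail} is needed to give the Cauchy-problem upper barrier a \emph{finite} spreading speed $C_1$ so that the estimates can be made to hold on a region $[-\tilde C t,\tilde C t]\supset[g(t),h(t)]$. The paper's Step~1 establishes exactly this kind of domination, $u(t,\cdot)\le\alpha_0 v(t,\cdot)$ for $t\ge t_0$, by showing $w:=\alpha_0 v-u\ge 0$ via a nonlocal maximum principle; your proposal claims $u\le\tilde v$ where $\tilde v$ solves the $v$-equation with initial data $\ge u_0$, which is the same idea with $\alpha_0=1$, although you do not verify that such a $\tilde v$ can be dominated back by the actual $v$, so the constant $\alpha_0$ is essential.

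The genuine gap is in your Steps (4)--(5), and it is not technical but structural. You argue: first get $\liminf_{t\to\infty}v\ge\delta>0$ on $[g(t),h(t)]$, then $u_t\le d(J*u-u)+u(1-u-k\delta)$, then ``$u$ decays exponentially near the fronts.'' This is simply false if $1-k\delta>0$, which is the generic situation after a single comparison pass: the lower bound you can extract on $v$ from Step~1 is $v\ge\frac{1}{1+h\alpha_0}-o(1)$, and $\beta_0:=1-\frac{k}{1+h\alpha_0}$ may well be \emph{nonnegative}, in which case $u$ can persist near the positive equilibrium $1-k\delta$ and your tail estimate on $h'$ never closes. The paper resolves exactly this obstruction with the iteration that your proposal is missing: from $u\le\alpha_0 v$ one derives the sharper pointwise bound $u\le[\alpha_1+o(1)]v$ with
\[
\alpha_1=\frac{\alpha_0+\alpha_0^2h}{\alpha_0+k}<\alpha_0,
\]
and repeats, showing $\alpha_m\downarrow$ with decrements bounded away from zero (using $k\ge 1$ and $h<1$), so that after finitely many steps $\alpha_{m_0}<\frac{k-1}{h}$, i.e.\ $\beta_{m_0}<0$. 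Only then does the linearised decay rate become strictly negative and the exponential decay of $u$ (uniformly on $[g(t),h(t)]$), the finiteness of $\int h'$, and hence $h_\infty-g_\infty<\infty$ follow. Without this bootstrap, the argument cannot be completed. Two further, smaller misstatements: ``Theorem~A's contrapositive'' does not characterize the spreading regime, it only gives necessary conditions for vanishing; and the assertion that spreading would force $\liminf_t u>0$ on expanding intervals is drawn from the $k<1$ case (Theorem~B) and is not available here --- indeed for $k>1>h$ the opposite ($u\to 0$ locally uniformly) always holds.
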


The assumption \eqref{thin-tail} guarantees that, in the absence of $u$, the species $v$ has a finite spreading speed, which is helpful for our proof, but we believe it is only a technical
condition. A kernel function satisfying \eqref{thin-tail} is known as a ``thin-tailed" kernel in the literature.
\medskip

Let use note that Proposition D and Theorem \ref{th1.2} are examples where $\mu_*=\infty$ in Theorem \ref{th1.1} (iv).

\medskip

The next result indicates that when $k>1>h$, the native species $v$ never vanishes.

\begin{theorem}\label{v-1}
If $k>1>h$, then for any $(u_0,v_0)$ satisfying \eqref{KnK1.3}, and any $L>0$, we have
\[\begin{cases}
\lim_{t\to\infty} u(t,x)=0 &\mbox{ uniformly for } x\in [-L, L],\\
\lim_{t\to\infty} v(t,x)=1 &\mbox{ uniformly for } x\in [-L, L].
\end{cases}
\]
\end{theorem}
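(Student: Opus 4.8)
The plan is to exploit the weak-strong structure ($k>1>h$) together with a comparison argument for $v$ that is driven by the (eventually small) total population of $u$ near the origin. The key observation is that, whether spreading or vanishing occurs for $u$, the interaction term $hu$ in the $v$-equation becomes small in an integrated sense on bounded sets, so $v$ is squeezed toward $1$; once $v$ is close to $1$ on a large interval, the competition term $1-u-kv$ in the $u$-equation becomes strictly negative (since $k>1$), forcing $u\to 0$ locally. First I would establish the easy upper bounds: $\limsup_{t\to\infty}u(t,x)\le 1$ and $\limsup_{t\to\infty}v(t,x)\le 1$ uniformly on $\R$, which follow from standard comparison with the ODE/nonlocal logistic dynamics (as in \cite{dns-part1,CLWZ}).

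The heart of the argument splits according to the two cases of Theorem \ref{th1.1}. In the \textbf{vanishing} case $h_\infty-g_\infty<\infty$, Theorem A gives directly $\int_\R u(t,x)\,dx\to 0$ and $\int_{-L}^{L}|v(t,x)-1|\,dx\to 0$; upgrading the latter to uniform convergence on $[-L,L]$ uses the regularity of $v$ coming from its equation (the nonlocal term maps bounded $L^1_{loc}$ data to continuous functions with controlled modulus of continuity, and $v_t$ is bounded), so an Arzel\`a--Ascoli / uniform-continuity argument promotes $L^1$ convergence to uniform convergence; uniform convergence $u\to0$ on $[-L,L]$ then follows similarly, or one invokes the partial result of \cite[Theorem 1.2]{dns-part1}. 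In the \textbf{spreading} case $h_\infty-g_\infty=\infty$, I would argue as follows. Fix $L>0$ and any small $\epsilon>0$. Since $k>1$, choose $\delta>0$ with $1-k(1-\delta)<-\epsilon_0<0$ for some fixed $\epsilon_0>0$. The goal is to show $v(t,x)\ge 1-\delta$ on a large interval for all large $t$, which then makes $u$ a strict subsolution of a decaying equation on $[-L,L]$. To get the lower bound on $v$: use that $u\le 1+o(1)$ and $h<1$, so on any interval where we can control $u$, the $v$-equation satisfies $v_t\ge d_2[\int_\R J_2(x-y)v\,dy-v]+\gamma v(1-v-h(1+o(1)))$; since $h<1$, the spatially homogeneous part has a positive stable equilibrium near $1-h>0$ — but this alone only gives $v\gtrsim 1-h$, not $v\gtrsim 1-\delta$.

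The \textbf{main obstacle} is precisely closing this gap: obtaining $v\ge 1-\delta$ rather than merely $v\ge 1-h$ on a large set. Overcoming it requires a feedback/bootstrap: once $v\ge c_0:=(1-h)/2>0$ on a large interval, the $u$-equation has reaction term $\le u(1-u-kc_0)$; if $kc_0\ge 1$ this already forces $u\to 0$ locally and we are done with room to spare, but in general $kc_0$ may be less than $1$, so instead one runs an iteration: $v\ge c_0 \Rightarrow u\le \bar u_1:=(1-kc_0)_+ + o(1) < 1 \Rightarrow v\ge c_1:=1-h\bar u_1 - o(1) > c_0 \Rightarrow u\le \bar u_2 < \bar u_1 \Rightarrow \cdots$. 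Since $h<1$ and $k\ge1$, I would check that the map $c\mapsto 1-h(1-kc)_+$ has its relevant fixed point at $c=1$ (equivalently $\bar u=0$), so the monotone sequences $c_n\uparrow 1$, $\bar u_n\downarrow 0$, giving $v\to 1$ and $u\to 0$ locally uniformly. Making each step of this iteration rigorous on expanding intervals — tracking that the "large interval" on which each bound holds can be taken arbitrarily large for $t$ large (which is where $h_\infty-g_\infty=\infty$, hence $v$ feels an essentially whole-line nonlocal diffusion, is used) — and handling the $o(1)$ error terms uniformly through the iteration, is the technical crux; a clean way to package it is via semi-orbit / $\omega$-limit set arguments for the nonlocal system, comparing with the spatially homogeneous competition ODE system on $\R$ whose only stable state under $k>1>h$ is $(0,1)$.
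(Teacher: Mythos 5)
Your proposal has the right intuition — the weak–strong ordering $k>1>h$ forces the bracketing pair to converge to $(0,1)$ via a monotone iteration — but it takes a considerably more complicated and less secure route than the paper, and as written it contains two genuine gaps.

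The paper's proof (Lemma 4.3, which is the source of Theorem 1.3) avoids the free boundary entirely by comparing $(u,v)$ with a single pair of \emph{whole-line} functions: $\bar u,\underline v$ solve the nonlocal competition Cauchy problem on $\R$ with the same initial data, and $u\le\bar u$, $v\ge\underline v$ follow from one application of the comparison principle (Lemma 5.2(iii)). The iteration you describe is then invoked for the Cauchy problem — it is essentially the argument in Zhang--Zhou, cited as a black box — and gives $\bar u\to 0$, $\underline v\to 1$ locally uniformly, which immediately yields the theorem without any case analysis. The free boundary plays no role; neither Theorem A nor any spreading/vanishing dichotomy is needed.

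By contrast, you split into the two cases of Theorem 1.1, which introduces two problems. In the vanishing case you want to upgrade the $L^1$-conclusions of Theorem A to uniform convergence via ``regularity'' and Arzel\`a--Ascoli, but nonlocal diffusion does \emph{not} smooth in $x$: the operator $\varphi\mapsto\int J(x-y)\varphi(y)\,dy-\varphi$ does not gain regularity, and the paper itself notes that this upgrade is only partially known (Theorem 1.2 of the prequel). You cannot invoke it as routine. In the spreading case you run the iteration on ``expanding intervals,'' which you yourself flag as the crux; you would need to control the nonlocal tails reaching outside each bounded interval and track that the uniform error terms at each stage shrink, which is exactly the bookkeeping that comparison with the whole-line system is designed to avoid. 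The fix is to drop the dichotomy and compare once with the Cauchy problem as the paper does; with that observation your iteration becomes the standard one for the whole-line competition system and the proof closes.
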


If the native species $v$ is not  well established at time $t=0$, for example, if $v_0$ is compactly supported (see Remark 1.5  (i) below for other natural choices of $v_0$), we will show that there are indeed chances for the weak competitor $u$ to invade successfully and establish itself in an increasing band behind the invasion front which goes to infinity as $t\to\infty$. Moreover, the invasion can succeed in both fronts (i.e., $h_\infty=\infty,\ g_\infty=-\infty$), or just in one front ($h_\infty=\infty$, $g_\infty$ is finite). 

To achieve these,  we assume that $J_1$ and $J_2$ both have compact support (for technical reasons), and the dispersal strategy of $u$ makes it a faster spreader than $v$, in the sense explained in the next several paragraphs, based on the notion of asymptotic spreading speed described in Proposition E and in \cite{DLZ-2021-JMPA}.

For a kernel function $J$ satisfying ({\bf J}) and \eqref{thin-tail}, consider the Cauchy problem of the  logistic 
equation
\begin{equation}\label{w}\begin{cases}
\dd w_t=d\big[\int_{\R}J(x-y)w(t,y)dy-w\big]+ aw-b w^2, & t>0,\ x\in\R,\\
w(0,x)=w_0(x), & x\in\R,
\end{cases}
\end{equation}
where
 $a, b$ and $d$ are positive constants,   $w_0(x)\geq 0$ is a continuous function with nonempty compact support.

It is well known (see, for example, \cite{CC, CDM, W, Y}) that the following results hold for \eqref{w}.

\medskip

\noindent {\bf Proposition E.} {\it The asymptotic spreading speed determined by \eqref{w} is given by
\[
c^*:=\min_{\lambda>0}\frac 1\lambda \left(d\int_{\R}J(x) e^{\lambda x}dx-d+a\right),
\]
namely,
 for any small $\epsilon>0$,
 \[
 \begin{cases}
 \lim_{t\to\infty}\max_{|x|\leq (c^*-\epsilon)t}|w(t,x)-\frac{a}b|=0,\\
 \lim_{t\to\infty}\max_{|x|\geq (c^*+\epsilon)t}w(t,x)=0.
 \end{cases}
 \]
 Moreover, for any $c\geq c^*$, there exists  a monotone function  $\phi=\phi_c\in  C^1(\R)$, called a traveling wave of \eqref{w} with speed $c$,  satisfying
	\begin{equation*}
		\begin{cases}
			\displaystyle d \int_{-\infty}^0 J(x-y) \phi(y) dy - d \phi+ c\phi' + a\phi -b\phi^2=0, &  -\infty < x< \infty,\\
			\displaystyle \phi(-\infty) = 1,\ \ \phi(\infty) =0.
					\end{cases}
	\end{equation*}
	Such a traveling wave is unique up to a translation of $x$, and no such traveling wave exists for speed $c<c^*$.
 }
 \medskip
 
Since
\[
\int_{\R}J(x) e^{\lambda x}dx=\int_0^\infty J(x)(e^{\lambda x}+e^{-\lambda x})dx>\int_{\R} J(x)dx=1,
\]
we see that
\begin{equation}\label{c*}
c^*>d \min_{\lambda>0}\frac 1\lambda \left(\int_{\R}J(x) e^{\lambda x}dx-1\right)\to\infty \mbox{ as } d\to\infty.
\end{equation}
\smallskip

For our competition system \eqref{KnK1.2}, in the absence of $u$, clearly $v$ satisfies \eqref{w} with  $(d, J, a,b, w_0)=(d_2, J_2, \gamma, \gamma, v_0)$.
So the asymptotic spreading speed of $v$ (in the absence of $u$, and with $v_0$ compactly supported) is
\[
C_2:=\min_{\lambda>0}\frac 1\lambda \left(d_2\int_{\R}J_2(x) e^{\lambda x}dx-d_2+\gamma\right).
\]
By \cite{DLZ-2021-JMPA}, in the absence of $v$, the asymptotic spreading speed of $u$, denoted by $c_1=c_1(\mu)$, satisfies
\[
0<c_1(\mu)<C_1, \ \lim_{\mu\to\infty} c_1(\mu)=C_1:=\min_{\lambda>0}\frac 1\lambda \left(d_1\int_{\R}J_1(x) e^{\lambda x}dx-d_1+1\right).
\]

We will say that {\bf ${\bf u}$ is a faster spreader than ${\bf v}$} if $c_1>C_2$, which is guaranteed, for instance, if $C_1>C_2$ and $\mu$ is sufficiently large. By \eqref{c*} we see that $C_1>C_2$ is guaranteed if $d_1$ is  sufficiently large when the other parameters are fixed.

\begin{theorem}\label{th1.3}
	Suppose that $v_0$ is compactly supported,   $k\geq 1>h$,  $J_1$ and $J_2$ have compact support  and $c_1>C_2$. 	 Then one can find initial functions $u_0$ for $u$ such that $(g_\infty, h_\yy)=(-\yy,\yy)$. 
		 \end{theorem}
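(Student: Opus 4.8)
The plan is to exhibit an admissible $u_0$ with a very large compact support, not too small on the bulk of that support, so that on the region lying ahead of the invasion front of $v$ the right free boundary $h(t)$ is trapped from below by a compactly supported lower solution that keeps moving to $+\infty$; since $(\mathbf{J})$ forces $J_1,J_2$ to be even, the mirror construction traps $g(t)$ from above by a lower solution moving to $-\infty$, so one such $u_0$ gives $h_\infty=\infty$ and $g_\infty=-\infty$ at once (no symmetry of $v_0$ is used). To set the stage: because $u\ge 0$, $v$ is a subsolution of \eqref{w} with $(d,J,a,b,w_0)=(d_2,J_2,\gamma,\gamma,v_0)$, so $v\le\bar V$ with $\bar V$ the corresponding solution; by Proposition E, together with the compact support of $J_2$ (which lets the initial transient be absorbed into a fixed spatial shift), for any prescribed small $\epsilon>0$ and $\epsilon_0>0$ there is a constant $K_\epsilon$ with $v(t,x)\le\bar V(t,x)\le\epsilon$ for all $x\ge\ell_+(t):=(C_2+\epsilon_0)t+K_\epsilon$ and all $t\ge0$. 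Set $\sigma:=1-k\epsilon\in(0,1)$. Because $J_1$ has compact support, the single-species nonlocal free boundary problem with parameters $(d_1,J_1,\mu)$ and growth rate $\sigma$ has a semi-wave $(c^\sharp,\phi^\sharp)$ whose speed $c^\sharp$ equals the asymptotic spreading speed of that single-species problem; since this speed depends continuously (and monotonically) on the growth rate and equals $c_1$ at growth rate $1$, with $c_1>C_2$, we may first take $\epsilon$ small, then $\epsilon_0$ small, then $c_0\in(C_2+\epsilon_0,c^\sharp)$, and finally $\delta\in\big(0,1-c_0/c^\sharp\big)$, so that $(1-\delta)c^\sharp>c_0>C_2+\epsilon_0$.

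The heart of the proof is the construction of the lower solution. On $\{x\ge\ell_+(t)\}$ we have $v\le\epsilon$, hence there $u_t\ge d_1\big[\int_{g(t)}^{h(t)}J_1(x-y)u(t,y)\,dy-u\big]+u(\sigma-u)$, i.e. $u$ obeys the differential inequality of the $\sigma$-equation on that region. I would therefore build $(\underline u,\underline g,\underline h)$ for $t\ge0$ with $\underline g(t):=\ell_+(t)$ imposed (so $\underline g'(t)=C_2+\epsilon_0\ge0$, which trivially dominates $-\mu\int\!\!\int\cdots$), with $\underline h(0):=h_0-1$, and with $\underline u(t,\cdot)$ supported on $[\underline g(t),\underline h(t)]$ and obtained by splicing: on a thin left layer $x\in[\underline g(t),\underline g(t)+s_0]$ let $\underline u=\theta_0\big(e^{\beta(x-\underline g(t))}-1\big)$, and on $x\in[\underline g(t)+s_0,\underline h(t)]$ let $\underline u=(1-\delta)\phi^\sharp(x-\underline h(t))$, with $\theta_0,\beta,s_0$ chosen so the pieces agree at $x=\underline g(t)+s_0$ and $\beta$ is large. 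The two things to check are: (i) the left layer is a subsolution of the $\sigma$-equation there because its left end is forced to drift to the right, so $\partial_t\underline u<0$, and this beats the right-hand side once $\beta\ge d_1/c_0$; (ii) on the rest, a short computation using the semi-wave identity $d_1\big[\int_{-\infty}^0J_1(z-z')\phi^\sharp(z')\,dz'-\phi^\sharp(z)\big]+c^\sharp(\phi^\sharp)'(z)+\phi^\sharp(z)(\sigma-\phi^\sharp(z))=0$ shows $(1-\delta)\phi^\sharp(x-\underline h(t))$ is a subsolution as soon as $\underline h'\le c^\sharp$, the surplus terms $\delta(1-\delta)c^\sharp|(\phi^\sharp)'|\ge0$ and $\delta(1-\delta)(\phi^\sharp)^2\ge0$ absorbing the error coming from the left layer once $\beta$ is large and the window $[\underline g(t),\underline h(t)]$ is wide. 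Finally set $\underline h'(t):=\mu\int_{\underline g(t)}^{\underline h(t)}\!\int_{\underline h(t)}^\infty J_1(x-y)\underline u(t,x)\,dy\,dx$, which equals $(1-\delta)c^\sharp$ once the window width exceeds the diameter of $\mathrm{supp}\,J_1$; taking $u_0$ with support $[-h_0,h_0]$, $h_0$ huge, and $u_0\equiv\sigma$ on $[-h_0+1,h_0-1]$ makes the window huge at $t=0$ (so $\underline h'\equiv(1-\delta)c^\sharp>c_0>C_2+\epsilon_0$ for all $t$ and the window only widens), ensures $[\underline g(0),\underline h(0)]\subseteq[-h_0,h_0]$, and gives $\underline u(0,\cdot)\le(1-\delta)\sigma<\sigma=u_0$ on $[\underline g(0),\underline h(0)]$.

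The comparison principle for \eqref{KnK1.2} then gives $[\underline g(t),\underline h(t)]\subseteq[g(t),h(t)]$ for all $t\ge0$, hence $h(t)\ge\underline h(t)=h_0-1+(1-\delta)c^\sharp t\to\infty$, i.e. $h_\infty=\infty$; the mirror construction on $(-\infty,-\ell_+(t)]$ gives $g_\infty=-\infty$, so $(g_\infty,h_\infty)=(-\infty,\infty)$ for this $u_0$. I expect the main obstacle to be the construction in the middle paragraph — fabricating a genuine compactly supported, correctly moving lower solution out of the non-compactly-supported semi-wave $\phi^\sharp$, keeping its support inside $\{x\ge\ell_+(t)\}$ so the competition term stays harmless, together with the bookkeeping that $(1-\delta)c^\sharp$ remains strictly above the recession speed $C_2+\epsilon_0$ of the favourable region. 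The auxiliary inputs — existence of the semi-wave $\phi^\sharp$ and continuity in the growth rate of the single-species spreading speed — are standard given that $J_1$ has compact support.
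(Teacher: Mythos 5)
The broad strategy — build a compactly supported lower solution out of the semi-wave of a perturbed single-species problem, moving with a speed between $C_2$ and $c_1$, sitting entirely in the region $\{v\le\rho/k\}$ where the competition term is harmless — is exactly the paper's. But the way you close the lower solution on its left edge is where the argument breaks down, and the paper's construction is specifically engineered to avoid this exact pitfall.

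Your lower solution drops from height $\approx (1-\delta)\sigma$ down to $0$ across a thin exponential layer of width $s_0$ at the left end of its support. Consider a point $x$ within distance $S=\mathrm{diam}\,\mathrm{supp}\,J_1$ of the splice, on the semi-wave side. The nonlocal term $d_1\int_{\underline g(t)}^{\underline h(t)}J_1(x-y)\underline u(t,y)\,dy$ differs from the idealized $d_1\int_{-\infty}^0 J_1(z-z')\phi^\sharp(z')\,dz'$ appearing in the semi-wave identity by the deficit
\[
d_1\int_{\underline g+s_0-S}^{\underline g+s_0} J_1(x-y)\Bigl[(1-\delta)\phi^\sharp(y-\underline h)-\theta_0\bigl(e^{\beta(y-\underline g)}-1\bigr)\Bigr]\,dy .
\]
With a wide window, $(1-\delta)\phi^\sharp(y-\underline h)\approx(1-\delta)\sigma$ throughout this range, whereas your exponential layer is essentially $0$ on most of $[\underline g+s_0-S,\underline g+s_0]$ as soon as $\beta$ is large — precisely the regime you need for part (i). Hence the deficit is of order $d_1(1-\delta)\sigma\int J_1$, i.e.\ $O(1)$, and this does \emph{not} shrink as $\beta\to\infty$ or as the window widens. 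The only surplus you have allocated to absorb it is $\delta(1-\delta)\,c^\sharp|(\phi^\sharp)'|+\delta(1-\delta)(\phi^\sharp)^2$, which near the left tail of $\phi^\sharp$ reduces to $\delta(1-\delta)\sigma^2$, an $O(\delta)$ quantity. The required inequality $\delta(1-\delta)\sigma^2\gtrsim d_1(1-\delta)\sigma/2$ fails for all but artificially small $d_1$, and $\delta$ is constrained from above by $1-c_0/c^\sharp$, so it cannot be enlarged to rescue this. (A small auxiliary issue: the crossing point of the two pieces is necessarily time-dependent, since $\underline h(t)-\underline g(t)$ is not constant, so the constants $\theta_0,\beta,s_0$ cannot give a genuinely continuous splice for all $t$; and taking a pointwise minimum does not repair this because the nonlocal operator does not preserve the subsolution property under $\min$.)

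The paper avoids this obstruction entirely by taking the lower solution symmetric about the midpoint $x=K\underline h(t)$: on $[K\underline h,\underline h]$ it is $(1-\epsilon)\phi(x-\underline h)$, and on $[(2K-1)\underline h,K\underline h]$ it is the mirror image $(1-\epsilon)\phi((2K-1)\underline h-x)$. Then near the interior splice $x=K\underline h$ \emph{both} pieces equal $(1-\epsilon)\phi$ evaluated at arguments that tend to $-\infty$ as $L\to\infty$, so both tend to $(1-\epsilon)(1-\rho)$ and the analogue of your deficit is $o(1)$, not $O(1)$; the constant surplus $(\epsilon-\epsilon^2)\phi^2$ then dominates it. Near the two outer edges the function drops to $0$, but there one is in the tail of $\phi$ and the semi-wave identity itself supplies the needed inequality. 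The choice of $K$ in \eqref{K4.13} ensures the left endpoint $(2K-1)\underline h(t)$ recedes at speed $(2K-1)(1-2\epsilon)c_1^\rho>C_2+\epsilon$, keeping the whole support ahead of the $v$-front. This mirror trick — two copies of the semi-wave rather than one semi-wave plus an ad hoc cutoff — is the missing idea in your proposal and the crux of the proof.
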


 \begin{theorem}\label{th1.4}
	In Theorem \ref{th1.3}, suppose additionally $k(1-h)>1$, then  there exist initial functions $u_0$ 
	  such that $h_\yy=\infty$ and $ g_\yy$ is finite. 
	  	 \end{theorem}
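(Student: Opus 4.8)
\noindent\emph{Proof proposal.} The plan is to exploit $k(1-h)>1$ through the following mechanism. Fix $\epsilon\in(0,1-h)$ with $k(1-h-\epsilon)>1$ and put $\kappa:=k(1-h-\epsilon)-1>0$. Wherever $v\ge 1-h-\epsilon$ one has $1-u-kv\le-\kappa<0$, so at such points $u$ can neither grow nor invade; we call $\mathcal R(t):=\{x: v(t,x)\ge 1-h-\epsilon\}$ a \emph{refuge} from $u$. Since $\mathrm{supp}\,v_0$ is compact, $\mathcal R(t)$ is born near $\mathrm{supp}\,v_0$ after a bounded time and thereafter expands in \emph{both} directions at a positive speed. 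We will take $h_0$ large and choose $u_0$ satisfying \eqref{KnK1.3} with $\|u_0\|_\infty\le 1$, of order one on $[h_0-\ell_0,h_0]$ for a suitable $\ell_0>0$, and at most $\delta$ on $[-h_0,h_0-\ell_0]$ with $\delta$ exponentially small in $h_0$. The order-one piece near $h_0$, via a one-sided version of the proof of Theorem~\ref{th1.3}, will force $h_\infty=\infty$; the tiny piece on the left will prevent $u$ from ever establishing itself near the left front before the refuge engulfs it, giving $g_\infty>-\infty$.

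\noindent\emph{Step 1 (lower bound for $v$, and the right front).} With $\|u_0\|_\infty\le1$ the constant $1$ is a supersolution, so $u\le1$; hence $v$ is a supersolution of the nonlocal Fisher--KPP equation $v_t=d_2[\int_\R J_2(x-y)v\,\rd y-v]+\gamma v(1-h-v)$. Comparing with its solution from $v_0$, which (as $J_2$ has compact support) spreads at the positive speed $\underline c:=\min_{\lambda>0}\lambda^{-1}\big(d_2\int_\R J_2(x)e^{\lambda x}\rd x-d_2+\gamma(1-h)\big)$ and tends to $1-h$, we get $\tau_0>0$ and $M>0$ with $\mathrm{supp}\,v_0\subset[-M,M]$ such that $v(t,x)\ge 1-h-\epsilon$ for $t\ge\tau_0$ and $|x|\le\underline c(t-\tau_0)-M$; in particular the right edge of $\mathcal R(t)$ is $\ge\underline c(t-\tau_0)-M$. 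Separately, because $v_0$ is compactly supported, $v$ is negligible near $x=h_0$ for $t\lesssim h_0/C_2$, during which the right front, driven by the order-one part of $u_0$ near $h_0$, advances at asymptotic speed $\approx c_1$; since $c_1>C_2$ it permanently outruns the right front of $v$ (speed $\le C_2$), whence $h_\infty=\infty$.

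\noindent\emph{Step 2 (the left front converges).} Since $J_1$ has compact support (range $R$), exponentially decaying supersolutions of $w_t=d_1[\int_\R J_1w-w]+w(1-w)$ show that the leftmost point where $u$ reaches a fixed small level $\eta_0$ moves left at most at some finite rate $c^{**}$. Combined with Step~1 this yields the key \emph{blocking estimate}: this ``leakage front'' must enter $\mathcal R(t)$, and hence stop, at a position that for $h_0$ large is positive and of order $h_0$; consequently $u$ is never driven up by leakage in a fixed neighbourhood of the left front. There, before $\mathcal R(t)$ arrives, $u$ grows only at rate $\le1$ from $u_0\le\delta$, so — by tracking $\max_{x\le h_0/2}u(t,x)$, whose derivative at an interior maximiser is $\le$ its own value because the nonlocal term is then nonpositive — the choice $\delta=\eta_0 e^{-T_1}$, with $T_1=O(h_0)$ the time by which $\mathcal R(t)\supseteq[-2h_0,h_0/2]$, gives $u\le\eta_0$ on $[-2h_0,h_0/2]$ for $t\in[0,T_1]$. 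Hence $-g'(t)=\mu\int_{g(t)}^{h(t)}\!\int_{-\infty}^{g(t)}J_1(x-y)u(t,x)\,\rd y\,\rd x\le \mu C_\star\eta_0$ on $[0,T_1]$, so $g(T_1)\ge-h_0-\mu C_\star\eta_0 T_1$ is a finite lower bound. For $t\ge T_1$, $v\ge1-h-\epsilon$ on $[-2h_0,h_0/2]$, so there $u$ is (up to a controlled boundary contribution) a subsolution of $w_t=d_1\mathcal L_{(-2h_0,\,h_0/2)}w-\kappa w$; as $\lambda_p(\mathcal L_{(-2h_0,\,h_0/2)})-\kappa<0$ for $h_0$ large (Proposition~C(iii)), a standard iteration/comparison argument (using that $u$ cannot enter the refuge from outside) gives $\sup_{[g(t),g(t)+R]}u(t,\cdot)\le C_\star' \eta_0 e^{-\kappa'(t-T_1)}$ for some $\kappa'>0$. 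Then $-g'$ is integrable on $[T_1,\infty)$, so $g_\infty=g(T_1)+\int_{T_1}^{\infty}g'(t)\,\rd t>-\infty$, while $h_\infty=\infty$ by Step~1; this is the assertion.

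\noindent\emph{Main obstacle.} The delicate point is the blocking estimate in Step~2: the seed of $u$ near $h_0$ that is indispensable for $h_\infty=\infty$ simultaneously feeds, through the kernel $J_1$, a plume of $u$ travelling leftward, and since $c_1>C_2$ the invader $u$ is the faster spreader, so a priori this plume could reach and reignite the left front. The resolution is that $\mathcal R(t)$, which — because $\mathrm{supp}\,v_0$ is compact — grows out of the centre in both directions, intercepts the plume from the right at a position of order $+h_0$ and is impenetrable to $u$ precisely because $k(1-h)>1$. Making this rigorous requires combining the finite leftward propagation rate for $u$, the KPP lower bound for $v$, and principal–eigenvalue estimates for $\mathcal L$ on the relevant moving intervals; this is where the bulk of the technical work lies, and where the extra hypothesis $k(1-h)>1$ is genuinely used.
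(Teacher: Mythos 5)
Your high‑level strategy matches the paper's: the extra hypothesis $k(1-h)>1$ creates a ``refuge'' where $1-u-kv\leq -\kappa u<0$; you take $u_0$ of order one near $x=h_0$ (to drive $h_\infty=\infty$ by the one‑sided construction from Theorem~\ref{th1.3}) and exponentially small on the left (so that by the time the refuge engulfs the region near $-h_0$ the species $u$ has not had time to grow there); and once the refuge controls a neighbourhood of $g(t)$, $u$ decays exponentially there and $-g'$ becomes integrable. This is indeed what the paper does, so the proposal is correct in spirit.

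However, the step you yourself identify as the ``bulk of the technical work'' contains a genuine gap, and it is exactly where the paper's construction departs from yours. Your Step~2 tries to treat $u$ on $[-2h_0,\,h_0/2]$ as a subsolution of the linear problem $w_t=\mathcal L_{(-2h_0,h_0/2)}w-\kappa w$ and then invoke $\lambda_p(\mathcal L_{(-2h_0,h_0/2)})-\kappa<0$. That fails because the nonlocal term in the true equation is
$d_1\!\int_{g(t)}^{h(t)}\! J_1(x-y)u(t,y)\,\rd y$,
and for $x$ within distance $S$ of the right end $h_0/2$ it picks up contributions from $y\in(h_0/2,\,h_0/2+S]$, where $u$ is \emph{not} small (this is the order‑one plume you describe). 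Saying ``up to a controlled boundary contribution'' is precisely the missing argument: an eigenvalue estimate on a fixed bounded interval does not control this forcing, and by itself gives no exponential decay. The paper resolves this by not working on a bounded interval at all: it builds an explicit piecewise‑linear upper barrier $\bar u(t,\cdot)$ defined on $[\bar g(t),\infty)$ which is tiny near $\bar g(t)$, increases linearly up to the level $M/\tilde t$ at $x=(\ln\tilde t)^\alpha$, and then rises to $\geq 1$ for $x\geq C_2\tilde t/2$ (so $\bar u\geq u$ automatically on the region where $u$ is order one). This one‑sided, unbounded barrier absorbs the right‑hand influx, and pairing it with $\bar g(t)=-\tilde L+\delta(\ln\tilde t)^{1-\alpha}$ (which has a finite limit) gives $g_\infty>-\infty$ directly from the comparison principle. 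A secondary, smaller issue in your write‑up is the mild circularity between ``$\mathcal R(t)\supseteq[-2h_0,h_0/2]$ for $t\geq T_1$'' and the bound $g(t)\geq -2h_0$ needed to make the eigenvalue comparison meaningful on that interval; this can be fixed by using a moving refuge interval and the smallness of $-g'$, but you do not address it. In short: the mechanism and the choice of initial data are the same as the paper's, but the eigenvalue‑on‑a‑fixed‑interval argument does not close; the explicit unbounded barrier $(\bar u,\bar g)$ is what makes the argument rigorous.
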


\begin{remark} In Theorems \ref{th1.3} and \ref{th1.4},  the following additional results hold:
\begin{itemize}
\item[(i)] The assumption that $v_0$ has compact support can be replaced by some more natural assumptions. For example, if $V(t,x)$ stands for the solution of \eqref{w} with $(d, J, a,b)=(d_2, J_2, \gamma, \gamma)$, where the initial function $w_0$ is compactly supported, then it is easy to check that the conclusions in Theorems \ref{th1.3} and \ref{th1.4} remain valid for any  $v_0$ satisfying 
\[\mbox{$ 0<  v_0(x)\leq V(t_0, x)$ for some $t_0>0$ and all $x\in\R$.}
\]
\item[(ii)]
The behaviours of the density functions $u(t,x)$ and $v(t, x)$ in Theorems \ref{th1.3} and \ref{th1.4} are described in Remarks \ref{rm4.1} and \ref{rm:4.2} later in the paper, immediately after the respective proofs of these theorems. 
\end{itemize}
\end{remark}
\medskip

The rest of the paper is organised as follows. In Section 2, we prove Theorem \ref{th1.2}. Section 3 is devoted to the proof of Theorems \ref{th1.3} and \ref{th1.4}.
The proof of Theorems \ref{th1.1} and \ref{v-1} are given in Section 4. The final Section 5 is an appendix, where we list several comparison principles used in this paper, whose proofs are not included, as they are simple variations of existing ones.

\section{Proof of Theorem \ref{th1.2}}

 It is clear that $v(t,x)>0$ for all $t>0$ and $x\in \R$. Therefore, for fixed $t_0>0$, there exists $\alpha_0>0$ such that 
	\begin{align*}
		u(t_0,x)\leq \alpha_0v(t_0,x)\ \mbox{ for }\ x\in \R,
	\end{align*}
	where we have used the assumption $u(t_0,x)=0$ for all $x\in \R\backslash [g(t_0),h(t_0)]$.
	
	{\bf Step 1}. We show $u(t,x)\leq  \alpha_0v(t,x)$ for all $t\geq t_0$ and $x\in \R$.
	
	 Denote $\td v(t,x):=\alpha_0 v(t,x)$. Then $(u,\td v)$ satisfies
	\begin{align*}
		\begin{cases}\dd	u_t = d\int_{g (t)}^{h(t)}J(x - y)u(t, y)\rd y - du + u(1 - u -k \td v/\alpha_0 ),   &
		t > 0, ~g(t) < x < h(t),\\[3mm]
				\dd	\td 	v_t = d\int_\mathbb{R}J(x - y)\td  v(t, y)\rd y - d\td  v +  \td v(1 - \td v/\alpha_0- hu),   &
		t > 0,~x\in\mathbb{R},
		\end{cases}
	\end{align*}
Let $w:=\td v-u$. Then due to $h<1\leq k$, the function $ w(t,x)$ satisfies 
\begin{align*}
	\dd	w_t = \ & d\int_{g(t)}^{h(t)}J(x - y)w(t, y)\rd y - dw +w\lf (1-\frac{\td v}{\alpha_0} -\frac{\alpha_0 h+1-k}{\alpha_0}u\rr)+\lf(1-h+\frac{k-1}{\alpha_0}\rr)u^2\\
	\geq\ & d\int_{g(t)}^{h(t)}J(x - y)w(t, y)\rd y - dw +w\lf (1-\frac{\td v}{\alpha_0} -\frac{\alpha_0 h+1-k}{\alpha_0}u\rr) \mbox{ for } \ t\geq t_0,\ x\in [g (t),h(t)].
\end{align*}
Clearly $w(t,x)\geq 0$ for $x=g (t)$ and $h(t)$,  and $w(t_0,x)\geq 0$ for $x\in [g(t_0),h(t_0)]$. It then follows from the comparison principle Lemma \ref{lemmacomp1} that 
	\begin{align*}
		w(t,x)\geq 0, \mbox{ i.e.}, \ u(t,x)\leq  \alpha_0v(t,x) \mbox{ for }\ t\geq t_0,\ x\in \R.
	\end{align*}

{\bf Step 2}. Estimates of $u$ and $v$ leading to the desired conclusion.

Using the result in Step 1, we see that $(u,v)$ satisfies
\begin{align*}
	\begin{cases}
		\dd	u_t \leq  d\int_{g(t)}^{h(t)}J(x - y)u(t, y)\rd y - du + u[1 - (1+k/\alpha_0 )u],  &
		t \geq t_0, ~g(t) < x < h(t),\\[4mm]
		\dd	 	v_t \geq  d\int_\mathbb{R}J(x - y)  v(t, y)\rd y - d  v +   v[1 - (1+ h\alpha_0)v],  &
		t \geq t_0,~x\in\mathbb{R},
	\end{cases}
\end{align*}

This allows us to compare $u$ and $v$ with $\hat U$, $U$ and $ V$  defined below:

\begin{itemize}
\item $(\hat U, \hat h, \hat g)$ is the solution of 
\begin{align}\label{hat-U}
	\begin{cases}
		\dd	\hat U_t = d \int_{\hat g (t)}^{\hat h(t)}J(x - y) \hat U(t, y)\rd y - d \hat U + \hat U[1-(1+k/\alpha_0)\hat U],   &
		t >t_0,\ ~\hat g (t) < x < \hat h(t),\\[4mm]
		\dd	\hat h^{\prime} (t) = \mu\int_{\hat g (t)}^{\hat h(t)}\int_{\hat h(t)}^{\infty}J(x - y)\hat U(t, x)\rd y\rd x,  &
		t > t_0,\\[4mm]
		\dd	\hat g^{\prime} (t) = -{\mu}\int_{\hat g (t)}^{\hat h(t)}\int_{-\infty}^{\hat g (t)}J(x - y)\hat U(t, x)\rd y\rd x,  &
		t > t_0,\\	
			\hat U(t, x) = 0,  &
		t \geq t_0,\  x \not\in (\hat g (t), \hat h(t)),\\
		\hat h(t_0) =h(t_0),\ \hat g (t_0)= g (t_0),\ \hat U(t_0, x) = u(t_0,x), &
		g (t_0) \leq x \leq h(t_0).
	\end{cases}
\end{align}

\item $U$ is the solution of
\begin{align}\label{U}
	\begin{cases}
	\dd		U_t = d\int_\mathbb{R}J(x - y)U(t, y)\rd y - d U +  U[1-(1+k/\alpha_0)U],   &
	t > t_0,~x\in\mathbb{R},\\[4mm]
	U(t_0, x) = u(t_0, x),	&
	t \geq 0,x \in\mathbb{R}.
\end{cases}
\end{align}

\item  $V$ be the solution of
\begin{align}\label{V}
	\begin{cases}
	\dd		V_t = d\int_\mathbb{R}J(x - y)V(t, y)\rd y - d V + V[1 - (1+ h\alpha_0)V],   &
	t > t_0,~x\in\mathbb{R},\\[4mm]
	V(t_0, x) = v(t_0, x),	& x \in\mathbb{R}.
\end{cases}
\end{align}
\end{itemize}

 By \cite[Theorem 1.1 and Proposition 1.3]{DLZ-2021-JMPA},  there exist  $C_1>0$ and $c_1=c_1(\mu)\in(0, C_1)$ 
such that, for any small $\epsilon>0$,
\begin{align}\label{speed1}
	\begin{cases}
		\dd \lim_{t\to \yy} \frac{\hat g (t)}{t}=\lim_{t\to \yy} \frac{\hat h(t)}{t}=c_1,\\
		\dd 	\lim_{t\to \yy}\max_{|x|\leq (C_1-\epsilon)t} |U(t,x)-\frac 1{1+k/\alpha_0}|=0,\\
	\dd 	\lim_{t\to \yy}\max_{|x|\leq (C_1-\epsilon)t} |V(t,x)-\frac 1{1+h\alpha_0}|=0.
	\end{cases}
\end{align} 

Using suitable versions of the comparison principle we have
\[
 u(t,x)\leq U(t,x),\ v(t,x)\geq V(t,x) \mbox{ for } t\geq t_0,\ x\in \R,
\]
and
\[
u(t,x)\leq \hat U(t,x), \ [ g (t), h(t)]\subset [\hat g (t), \hat h(t)] \mbox{ for } t\geq t_0,\ x\in [g (t), h(t)].
\]
Hence, by \eqref{speed1}, for $\td C:=(c_1+C_1)/2$ and all large $t$,  say $t\geq t_1$,  we have
\begin{align}\label{4.12a}
	\begin{cases}
	\dd u(t,x)\leq \frac{1}{1+k/\alpha_0}+o_t(1) & \mbox{ for }\ x\in [g(t),h(t)]\subset [-\td Ct,\td C t], \\
		\dd v(t,x)\geq  \frac{1}{1+\alpha_0h}-o_t(1)& \mbox{ for }\ x\in  [-\td C t,\td C t],
	\end{cases}
\end{align}
 where  $o_t(1)$ denotes a generic constant satisfying $o_t(1)\to 0$ as $t\to\infty$.

Case 1: 
 $\beta_0:=1 -\frac{k}{1+\alpha_0h}<0$. In this case,    by  \eqref{4.12a}, for all large $t$, say $t\geq t_2>t_1$,
\begin{align*}
	\dd	u_t \leq  d_1\int_{g(t)}^{h(t)}J(x - y)u(t, y)\rd y - d_1u + \frac{\beta_0}{2}u  \ \mbox{ for } \ 
	 ~g(t) \le x \le h(t),
\end{align*}
which implies, by comparison with the corresponding ODE, that
\[
\mbox{$u(t,x)\leq e^{(t-t_2)\beta_0/2} \max_{x\in [g(t_2),h(t_2)]} u(t_2,x)$ for $t\geq t_2$ and $x\in [g(t),h(t)]$.}
\]
 Thus, $u(t,x)\to 0$ as $t\to\infty$ uniformly for $x\in [g(t), h(t)]$, and by the free boundary equation for $h'(t)$ and the estimate
\begin{align*}
	\infty>J_{\max}:=\int_{0}^{\yy} J(y)y \rd y=\int_{-\yy}^{0} \int_{0}^{\yy} J(x-y) \rd y \rd x\geq  \int_{g(t)}^{h(t)}\int_{h(t)}^{\yy} J(x-y)\rd y \rd x,
\end{align*}
we obtain, for some constant $C>0$,
\begin{align*}
	h(t)&=h(t_2)+\int_{t_2}^{t}h'(s) \rd s\leq h(t_2)+C\mu J_{\max}  \int_{t_2}^{t} e^{\beta_0 (s-t_2)/2}\rd s \\
	&\leq h(t_2)+\frac{2C\mu J_{\max} }{|\beta_0|}<\yy\ \mbox{ for }\ \ t\geq t_2.
\end{align*}
So $h_\infty<\infty$. Similarly we can show $g_\infty>-\infty$. Moreover, using the fact $u(t,x)\to 0$ as $t\to\infty$ uniformly for $x\in [g(t), h(t)]$, and the comparison principle, it can be easily shown that $v(t,x)\to 1$ locally uniformly for $x\in\R$ as $t\to\infty$.

So in the  case $\beta_0<0$, the desired conclusions hold.

Case 2:  $\beta_0=1 -\frac{k}{1/\alpha_0+h}\geq 0$.

In this case we can repeat the method  leading to \eqref{4.12a} in the following way. Firstly since $u(t,x)\equiv 0$ for $x\not\in [g(t),h(t)]$, we  see from \eqref{4.12a} that 
\begin{align*}
		u(t,x)\leq [\alpha_1+o_t(1)]v(t,x) \mbox{ for all large $t$ and } \ x\in \R, 
\end{align*}
with 
\begin{align*}
	\alpha_1:=\frac{\alpha_0+\alpha_0^2h}{\alpha_0+k}=\alpha_0-\frac{\alpha_0(k-1)+\alpha_0^2(1-h)}{\alpha_0+k}.
\end{align*}
Clearly,  $\alpha_1<\alpha_0$. We are now in a position to repeat the earlier argument  to obtain an analogue of \eqref{4.12a}, namely
\begin{align*}\begin{cases}
	u(t,x)\leq \frac{1}{1+k/\alpha_1}+o_t(1) & \mbox{ for all large $t$ and } \ x\in [g(t),h(t)]\subset [-\td Ct,\td C t],\\
	v(t,x)\geq  \frac{1}{1+\alpha_1h}-o_t(1) & \mbox{ for all large $t$ and }\ x\in  [-\td Ct,\td C t].
	\end{cases}
\end{align*}
Then, we  estimate $u$ and $v$ according to whether $\beta_1:=1 -\frac{k}{1+\alpha_1h}<0$ or $\beta_1\geq 0$.

If $\beta_1<0$, then we can deduce the desired conclusions by similar arguments as in Case 1 above.

If $\beta_1\geq 0$, then we  analogously define
\begin{align*}
	\alpha_2:=\frac{\alpha_1+\alpha_1^2h}{\alpha_1+k}=\alpha_1-\frac{\alpha_1(k-1)+\alpha_1^2(1-h)}{\alpha_1+k},
\end{align*} 
and obtain
\[
u(t,x)\leq [\alpha_2+o_t(1)]v(t,x) \mbox{ for all large $t$ and } \ x\in \R.
\]

Following this procedure, we obtain two decreasing sequences $\{\alpha_m\}$ and $\{\beta_m\}$ given by
\begin{align*}
	\alpha_m:=\frac{\alpha_{m-1}+\alpha_{m-1}^2h}{\alpha_{m-1}+k},\  \beta_m:=1 -\frac{k}{(1+\alpha_{m-1}h)},\ m=1,2,...
\end{align*}
We claim that  there is an integer $m_0\geq 2$  such that $\beta_{m_0-1}\geq 0$ and
\begin{align*}
	\beta_{m_0}=1 -\frac{k}{1+\alpha_{m_0}h}<0\ \ {\rm or\ equivalently}\ \ \alpha_{m_0}<\frac{k-1}{h}.
\end{align*}
 Note that the cases of $m_0=0$ and $m_0=1$ have already been considered above.   
If  $\alpha_m\geq \alpha_*:=(k-1)/h$ for all $m\geq 1$, then
\begin{align*}
		\alpha_{m}-\alpha_{m-1}&=\frac{\alpha_{m-1}+\alpha_{m-1}^2h}{\alpha_{m-1}+k}-\alpha_{m-1}\\
		&= -\frac{\alpha_{m-1}(k-1)+\alpha_{m-1}^2(1-h)}{\alpha_{m-1}+k}\\
		&\leq  -\frac{\alpha_{*}(k-1)+\alpha_{*}^2(1-h)}{\alpha_{*}+k}<0,
\end{align*}
which implies $a_m\to -\yy $ as $m\to \yy$. This is a contradiction to the assumption $\alpha_m\geq (k-1)/h$. Hence, there is a finite integer $m_0\geq 2$ such that $\alpha_{m_0}< (k-1)/h$ and so $\beta_{m_0}<0$.

Using $\beta_{m_0}< 0$, we could show $h_\yy-g_\yy<\infty$,  $u(t,x)\to 0$ as $t\to\infty$ uniformly for $x\in [g(t), h(t)]$,  and $v(t,x)\to 1$ as $t\to\infty$ locally uniformly for $x\in\R$,
as in Case 1 above. The proof of Theorem \ref{th1.2} is now complete. \hfill $\Box$

\section{Proof of Theorems \ref{th1.3} and \ref{th1.4}}

		{\bf Proof of Theorem \ref{th1.3}.}  Let $V(t,x)$ denote the solution of \eqref{w} with $(d, J, a,b, w_0)=(d_2, J_2, \gamma, \gamma, v_0)$. The comparison principle yields $v(t,x)\leq V(t,x)$ for all $t\geq 0$ and $x\in \R$.

By \cite{DLZ-2021-JMPA}, with
\[
f_\rho(u):=u(1-\rho-u),
\]
for each small $\rho>0$,
there is a monotone function  $\phi=\phi_\rho\in  C^1((-\infty, 0])$ and a constant $c_1^\rho>0$ such that
	\begin{equation*}
		\begin{cases}
			\displaystyle d_1 \int_{-\infty}^0 J_1(x-y) \phi(y) dy - d_1 \phi(x)+ c^\rho_1\phi'(x) + f_\rho(\phi(x)) =0, &  -\infty < x< 0,\\
			\displaystyle \phi(-\infty) = 1,\ \ \phi(0) =0,\\
			\dd c_1^\rho=\mu\int_{-\infty}^{0}\int_{0}^{+\infty}J(x-y)\phi(x)dydx.
		\end{cases}
	\end{equation*}
	Moreover, $c_1^\rho\to c_1$ as $\rho\to 0$. Therefore we can fix $\rho>0$ small so that $c_1^\rho>C_2$.
	
	Fix small $\epsilon>0$ so that $C_2+\epsilon<c_1^\rho$. By Proposition E in Section 1 here  and Lemma 2.2 in \cite{DM}, there is $L_0>0$ such that 
		\begin{align*}
				\sup_{|x|\geq (C_2+\epsilon)t+L_0} v(t,x)\leq 	\sup_{|x|\geq (C_2+\epsilon)t+L_0} V(t,x)\leq \frac{\rho}{k}\ \mbox{ for all } \ t\geq 0.
		\end{align*}
	
Fix $S>0$ such that the support of $J_1$ is contained  in the interval $[-S,S]$, and fix  $K$ satisfying
 \label{key}
\begin{align}\label{K4.13}
	1>K>\frac{(1-2\epsilon)c_1^\rho+(C_2+\epsilon)}{2(1-2\epsilon)c_1^\rho}.
\end{align}
Then define, for some large constant $L>L_0/(2K-1)$, 
		\begin{align*}
			&\underline h(t):=c_1^\rho(1-2\epsilon) t+L, 
		\end{align*}
	and
	\begin{equation*}
	\underline u(t,x):=
	\begin{cases}
(1-\epsilon) \phi(x-\underline h(t)), & t\geq 0,\; K\underline h(t)\leq  |x|\leq \underline  h(t),\\
(1-\epsilon) \phi\big((2K-1)\underline h(t)-x\big), & t\geq 0,\; (2K-1)\underline h(t)\leq  |x|\leq K\underline h(t).
	\end{cases}
	\end{equation*}
		 The choice of $K$ implies
		\begin{align*}
			(2K-1)\underline h(t)>(C_2+\epsilon) t+L_0\ \mbox{ for } \ t\geq 0,
		\end{align*}
	and  so $kv(t,x)\leq \rho$ for $t\geq 0$ and $|x|\geq (2K-1)\underline h(t)$. Therefore, if $T>0$ and $h(t)>(2K-1)\underline h(t)$ for $t\in [0, T]$, then
	\begin{equation}\label{u-super}
			\begin{cases}
				\dd  u_t\geq  d_1 \int_{(2K-1)\underline h(t)}^{h(t)}  J_1(x-y)  u(t,y)\rd y-d_1 u+f_\rho( u), & t\in (0, T],\; x\in  ((2K-1)\underline h(t), h(t)),\\[4mm]
				\dd  h'(t)\geq   \mu\dd\int_{(2K-1)\underline h(t)}^{h(t)} \int_{h(t)}^{+\yy}  J_1(x-y) u(t,x)\rd y\rd x, & t\in (0, T],\\
				 u(t, (2K-1)\underline h(t))>0=u(t, h(t)), & t\in (0, T].
			\end{cases}
		\end{equation}
		
		We  will show that, with $\underline h$ denoting $\underline h(t)$, for sufficiently large $L$,
		\begin{equation}\label{4.12}
			\begin{cases}
				\dd \underline u_t\leq  d_1 \int_{(2K-1)\underline h}^{\underline h}  J_1(x-y) \underline u(t,y)\rd y-d_1\underline u+f_\rho(\underline u), & t>0,\; x\in  ((2K-1)\underline h,\underline h)\backslash \{K\underline h\},\\[4mm]
				\dd \underline h'\leq  \mu\dd\int_{(2K-1)\underline h}^{\underline h} \int_{\underline h}^{+\yy}  J_1(x-y)\underline u(t,x)\rd y\rd x, & t>0,\\
				\underline u(t,\underline h)=\underline u(t,(2K-1)\underline h) =0, & t\geq 0.
			\end{cases}
		\end{equation}
		
		Assuming the validity of \eqref{4.12} for the time being, let us see how $(u_0, h_0)$ can be chosen to obtain the desired conclusion.
		
Clearly, if $h_0\geq \underline h(0)$ and $u_0(x)\geq \underline u(0,x)$ for $x\in [(2K-1)\underline h(0),\underline h(0)]$,  then due to \eqref{u-super}, \eqref{4.12} and
$g(t)\leq -h_0<0<(2K-1)\underline h(t)$, 
\[
u(t,x)>0=\underline u(t, x) \mbox{ for } x=(2K-1)\underline h(t),\ t\geq 0,
\]
we can use the comparison principle   to conclude that 
\begin{align*}
 h(t)\geq \underline h(t),\ 
u(t,x)\geq \underline u(t,x) \mbox{ for }\ t\geq 0,\ x\in [(2k-1)\underline h(t),\underline h(t)]. 
\end{align*}
Hence, $h(t)\to \yy$ as $t\to\infty$.  

Similarly, we can show that if $h_0\geq \underline h(0)$ and $u_0(x)\geq \underline u(0, -x)$ for $x\in [-\underline h(0), -(2K-1)\underline h(0)]$,  then
\[
g(t)\leq -\underline h(t),\ 
u(t,x)\geq \underline u(t,-x) \mbox{ for }\ t\geq 0,\ x\in [-\underline h(t), -(2k-1)\underline h(t)]. 
\]
Therefore $g(t)\to-\infty$ as $t\to\infty$.

Hence the desired conclusion  $(g_\infty, h_\infty)=(-\infty, \infty)$ holds if the initial data $(u_0, h_0)$ satisfies
\[
h_0>\underline h(0), \ u_0(x)\geq \underline u(0,x) \mbox{ for } |x|\in [(2k-1)\underline h(0),\underline h(0)].
\]
		
To complete the proof, it remains to prove \eqref{4.12}.		
Clearly, the third equation in \eqref{4.12} follows directly from the definition of $\underline u$. 	Now, we verify the first two inequalities in \eqref{4.12}.  Recalling that $J_1$ has  compact support contained in $[-S, S]$, by a direct computation we obtain,  for large $L$ and all $t> 0$, 
		\begin{align*}
			 &\mu\dd\int_{(2K-1)\underline h}^{\underline h} \int_{\underline h}^{+\yy}  J_1(x-y)\underline u(t,x)\rd y\rd x
			 \geq  \mu\dd\int_{K\underline h}^{\underline h} \underline u(t,x)\int_{\underline h}^{+\yy}  J_1(x-y)\rd y\rd x\\
			 =&\mu(1-\epsilon)\dd\int_{(K-1)\underline h}^{0} \phi(x)\int_{0}^{+\yy}  J_1(x-y)\rd y\rd x=\mu(1-\epsilon)\dd\int_{-\yy}^{0} \phi(x)\int_{0}^{+\yy}  J_1(x-y)\rd y\rd x\\
			 =&(1-\epsilon )c_1>\underline h'(t).
		\end{align*}
	This showed the validity of the second inequality in \eqref{4.12}.

We next varify the first inequality in \eqref{4.12}.

{\bf Case 1}. $x\in (K\underline h,\underline h)$.

		In view of  the equation satisfied by $\phi$, 
		we deduce for $t>0$ and $x\in (K\underline h,\underline h]$,
		\begin{align*}
			\underline u_t(t,x)&=-(1-\epsilon)c_1^\rho(1-2\epsilon)\phi'(x-\underline h)\leq -(1-\epsilon)c_1^\rho\phi'(x-\underline h)\\ 
			&= (1-\epsilon) \lf[d_1  \int_{- \yy}^{\underline h}    J_1 (x-y)   \phi(y-\underline h)\rd y -d_1 \phi(x-\underline h)+ f_\rho(\phi(x-\underline h))\rr]\\ 
			&=d_1  (1-\epsilon)\int_{x-S}^{\underline h}    J_1 (x-y)  \phi(y-\underline h)\rd y -d_1  \underline u(t,x)+(1-\epsilon) f_\rho(\phi(x-\underline h)):=A.
		\end{align*}
		
	If $x\in [K\underline h+S,\underline h]$, then $x-y\geq S$ when $y\leq K\underline h$, and therefore
	\begin{align*}
	\underline u_t(t,x)\leq A&= d_1  \int_{K\underline h}^{\underline h}    J_1 (x-y)  \underline u(t,y)\rd y -d_1  \underline u(t,x)+(1-\epsilon) f_\rho(\phi(x-\underline h))\\
	&\leq d_1  \int_{K\underline h}^{\underline h}    J_1 (x-y)  \underline u(t,y)\rd y -d_1  \underline u+ f_\rho(\underline u),
\end{align*}
where we have used the fact that $(1-\epsilon) f_\rho(\phi(x-\underline h))\leq  f_\rho((1-\epsilon)\phi(x-\underline h))$.

For $x\in (K\underline h,K\underline h+S]$, we have
	\begin{align*}
	\underline u_t(t,x)\leq A&= d_1  \int_{K\underline h-S}^{\underline h}    J_1 (x-y)  \underline u(t,y)\rd y -d_1  \underline u(t,x)+f_\rho(\underline u)\\
	&\ \ \ \ +\int_{K\underline h-S}^{K\underline h}    J_1 (x-y)  [(1-\epsilon)\phi(y-\underline h)-\underline u(t,y)]\rd y+(1-\epsilon) f_\rho(\phi(x-\underline h))-f_\rho(\underline u)\\
	&= d_1  \int_{K\underline h-S}^{\underline h}    J_1 (x-y)  \underline u(t,y)\rd y -d_1  \underline u(t,x)+f_\rho(\underline u)\\
	&\ \ \ \ +\int_{K\underline h-S}^{K\underline h}    J_1 (x-y)  [(1-\epsilon)\phi(y-\underline h)-\underline u(t,y)]\rd y-(\epsilon-\epsilon^2) \phi^2(x-\underline h).
\end{align*}
From the fact that $\phi(s)\to 1-\rho$ as $s\to -\yy$, we deduce 
\begin{align*}
	&\int_{K\underline h-S}^{K\underline h}    J_1 (x-y)  [(1-\epsilon)\phi(y-\underline h)-\underline u(t,y)]\rd y-(\epsilon-\epsilon^2) \phi^2(x-\underline h)\\
	&=o(1)-(\epsilon-\epsilon^2)(1-\rho)< 0
\end{align*}
for large $L$ and all $t>0$. Therefore we also have
\[
\underline u_t(t,x)\leq d_1  \int_{K\underline h-S}^{\underline h}    J_1 (x-y)  \underline u(t,y)\rd y -d_1  \underline u+f_\rho(\underline u).
\]

{\bf Case 2}. $x\in [(2K-1)\underline h,K\underline h]$.

Note that for each $t\geq 0$, the function $\underline u(t,x)$ is symmetric with respect to $x=Kh(t)$:
\[
\underline u(t,x)=\underline u(t,2K\underline h-x).
\]
  Thus, for $x\in [(2K-1)\underline h,K\underline h]$,
\begin{align*}
	A_1:=&\ d_1  \int_{(2K-1)\underline h}^{\underline h}    J_1 (x-y)  \underline u(t,y)\rd y -d_1  \underline u(t,x)+f_\rho(\underline u(t,x))\\
	=&\ d_1  \int_{(2K-1)\underline h}^{\underline h}    J_1 (x-y)  \underline u(t,2K\underline h-y)\rd y -d_1  \underline u(t,2K\underline h-x)+f_\rho(\underline u(t,2K\underline h-x))\\
	=&\ d_1  \int_{(2K-1)\underline h}^{\underline h}    J_1 (2K\underline h-x-y)  \underline u(t,y)\rd y -d_1  \underline u(t,2K\underline h-x)+f_\rho(\underline u(t,2K\underline h-x)).
\end{align*}
Since $2K\underline h-x\in [K\underline h, \underline h]$, from the calculations in Case 1, one finds that $A_1\geq  0$. On the other hand, a direct computation gives, for $x\in ((2K-1)\underline h,K\underline h)$,
\begin{align*}
	u_t(t,x)&=2K(1-\epsilon)c_1^\rho(1-2\epsilon)\phi'\big(2Kh(t)-x\big)\leq 0.
\end{align*}
This immediately gives $\underline u_t(t,x)\leq 0\leq A_1$. Therefore, the first inequality in \eqref{4.12} always holds. 
\medskip

 The proof of  Theorem \ref{th1.3} is now complete.
\hfill $\Box$

\begin{remark}\label{rm4.1}
 From the proof of Theorem \ref{th1.3}, we actually know that
	\[\begin{cases}
	\limsup_{t\to\infty}\frac{h(t)}t\leq c_1,\\
	\liminf_{t\to\infty}\frac{h(t)}t\geq c_1^\rho,\end{cases}\  \begin{cases}
	\liminf_{t\to\infty}\frac{g(t)}t\geq -c_1,\\
	\limsup_{t\to\infty}\frac{g(t)}t\leq -c_1^\rho,\end{cases}
	\]
	\[\begin{cases}
	\liminf_{t\to\infty} \min_{(C_2+\epsilon)t\leq |x|\leq (c^\rho_1-3\epsilon)t}u(t,x)\geq 1-\epsilon,\\
	\lim_{t\to\infty} \max_{|x|\geq (C_2+\epsilon)t}v(t,x)=0.
	\end{cases}
	\]

	 \end{remark}
 
 \bigskip
 
 {\bf Proof of Theorem \ref{th1.4}.}	In this proof, for convenience we will replace the initial population range $[-h_0, h_0]$ of $u$ by some general interval $[g_0, h_0]$ not necessarily symmetric about $x=0$.
Note that such a non-symmetric case can always be reduced to a symmetric one by a simple shift of the variable $x$: if $(u(t,x), v(t,x),  h(t),  g (t))$ has the required property with initial range $(g_0, h_0)$ for $u$, then, with $x_0:= \frac{g_0+h_0}2$,
\[
(\tilde u(t,x), \tilde v(t,x), \tilde h(t), \tilde g (t)):=(u(t, x-x_0), v(t, x-x_0), h(t)-x_0, g (t)-x_0)
\]
 is a solution with symmetric initial range $(-\frac{h_0-g_0}2, \frac{h_0-g_0}2)$ for $u$ and the desired property. So no generality is lost by considering a general initial range
$[g_0, h_0]$ of $u$.

Since $v_0$ is compactly supported, from the proof  of Theorem \ref{th1.3} we see that a lower solution can  be constructed on one side of the population range  $[g(t),h(t)]$ of $u$, for example  in a subset of
$[0, h(t)]$ to guarantee that $h_\infty=\infty$,  while there are no specific restrictions imposed on the other side of the population range of $u$, namely the side $[g(t), 0]$.  This suggests  that it is perhaps possible to choose an initial function $u_0(x)$ which is sufficiently small for $x\leq 0$ such that  $g(t)$ remains uniformly bounded for all $t$ while $h(t)\to \yy$ as $t\to \yy$. We show below that this is indeed possible.

{\bf Step 1.} Construction of an upper solution $(\bar u, \bar g)$.
 
Let $\alpha>1$ be a fixed number, and $T>0$ be a large constant to be specified. Denote $\td t=t+T$ for $t\in \R$, and define 
		\begin{align*}
	&\bar g=\bar g(t):=-\tilde L+\delta (\ln \td t)^{1-\alpha} ,  \ \ \ t\geq 0
\end{align*}
and
\begin{equation*}
	\bar u(t,x):=
	\begin{cases}
	\dd\frac{M}{\td t[(\ln \td t)^{\alpha}-2\bar g]}(x-2\bar g), & t\geq 0,\; x\in [\bar g,(\ln \td t)^{\alpha}],\\
\dd 	\frac{1-M/\td t}{C_2\td t/2-(\ln\td t)^{\alpha}}[x-(\ln \td t)^{\alpha}]+\frac{M}{\td t}, & t\geq 0,\; x\in ((\ln \td t)^{\alpha},+\yy),
	\end{cases}
\end{equation*}
for some $\delta>0$, $M>0$  and large $\tilde L>0$. Let us recall that $C_2$ is the spreading speed of $v$ in the absence of $u$. 
Clearly, $\bar u$ is continuous and piecewise linear in $x$.

We next  show that, with $\bar g$ denoting $\bar g(t)$,  
	\begin{equation}\label{4.15}
	\begin{cases}
		\dd \bar u_t\geq   d_1\! \int_{\bar g}^{+\yy} \!\!\! J_1(x\!-\!y) \bar u(t,y)\rd y\!-\!d_1\bar u-\frac{k(1-h)-1}{2}\bar u, & t>0,\; |x|\in  (\bar g,C_2 \td t/2)\backslash \{(\ln \td t)^{\alpha}\},\\[4mm]
		\dd \bar g'\leq  -\mu\dd\int_{\bar g}^{+\yy} \bar u(t,x)\int_{-\yy}^{\bar g}  J_1(x-y)\rd y\rd x, & t>0,\\
		\bar u(t,\bar g)\geq 0, \
			\bar u(t,x)\geq 1, & t\geq 0,\; x\geq  C_2\td t/2.
	\end{cases}
\end{equation}
It is clearly that the two inequalities in the third line of \eqref{4.15} follow directly from  the definition of $\bar u$. In the following we check the other inequalities in \eqref{4.15}.

{\bf (a)} We verify the second inequality in \eqref{4.15}. 

Suppose that the support of  $J_1$ is contained  in the interval $[-S,S]$ for some $S>0$. Then for $T\gg S$, 
\begin{align*}
	&-\mu\dd\int_{\bar g}^{+\yy} \bar u(t,x)\int_{-\yy}^{\bar g}  J_1(x-y)\rd y\rd x=-\mu\dd\int_{\bar g}^{\bar g+S} \bar u(t,x)\int_{-\yy}^{\bar g}  J_1(x-y)\rd y\rd x\\
	\geq& -\mu\dd\int_{\bar g}^{\bar g+S} \bar u(t,x)\rd x\geq -\mu S \bar u(t,\bar g+S)=	\dd\frac{-\mu M}{\td t[(\ln \td t)^{\alpha}-2\bar g]}(-\bar g+S)\\
	\geq &\	\dd\frac{-\mu M(\tilde L+S)}{\td t[(\ln \td t)^{\alpha}+2\tilde L]}\geq \frac{-2\mu M(\tilde L+S)}{\td t(\ln \td t)^{\alpha}}.
\end{align*}
Note that 
\begin{align*}
	\bar g'(t)= -\frac{(\alpha-1)\delta}{\td t(\ln \td t)^{\alpha}}\ \mbox{ for }\ t\geq 0.
\end{align*}
Hence, the desired  inequality holds if 
\begin{align*}
	\delta\geq \frac{2\mu M(\tilde L+S)}{\alpha-1}.
\end{align*} 

   {\bf (b)} We prove  the first inequality in \eqref{4.15}. 
   
   A direct calculation gives, for $x\in [\bar g,(\ln \td t)^{\alpha})$ and $t>0$,
   \begin{align*}
   	\bar u_t=&\ 	\frac{M}{\td t[(\ln \td t)^{\alpha}-2\bar g]}(-2\bar g')-	\frac{M}{\td t[(\ln \td t)^{\alpha}-2\bar g]}(x-2\bar g)\frac{(\ln \td t)^{\alpha}+\alpha (\ln \td t)^{\alpha-1}-2\bar g-2\td t\bar g'}{\td t[(\ln \td t)^{\alpha}-2\bar g]}\\
   	\geq &-\bar u \frac{(\ln \td t)^{\alpha}+\alpha (\ln \td t)^{\alpha-1}-2\bar g-2\td t\bar g'}{\td t[(\ln \td t)^{\alpha}-2\bar g]}\geq -\frac{k(1-h)-1}{4}\bar u\ \ \ {\rm for\ large }\ T>0,
   \end{align*}
where we have used the fact that $0>\bar g'=\dd -\frac{(\alpha-1)\delta}{\tilde t (\ln \tilde t)^\alpha}$. 

For $x>(\ln \tilde t)^\alpha$ and $t>0$, we have
{\small \begin{align*}
		\bar u_t=&\ 	\frac{1-M/\td t}{C_2\td t/2-(\ln\td t)^{\alpha}}[-\alpha \td t^{-1}(\ln \td t)^{\alpha-1}]-\frac{M}{\td t^2}\\
		&+	 \frac{x-(\ln \td t)^{\alpha} }{C_2\td t/2-(\ln\td t)^{\alpha}}	\frac{M\td t^{-2}(C_2\td t/2-(\ln\td t)^{\alpha})-(1-M/\td t)[C_2/2-\alpha\td t^{-1}(\ln\td t)^{\alpha-1}]}{C_2\td t/2-(\ln\td t)^{\alpha}}\\
\geq & \	\frac{4}{C_2\td t}[-\alpha \td t^{-1}(\ln \td t)^{\alpha-1}]-\frac{M}{\td t^2}-\dd 	\frac{x-(\ln \td t)^{\alpha}}{C_2\td t/2-(\ln\td t)^{\alpha}}\dd 	\frac{2}{C_2\td t}\\
\geq& -\frac{k(1-h)-1}{4}\bar u\ \ \ {\rm for\ large}\ T>0.
\end{align*}}

We now estimate the nonlocal diffusion term for such $x$ and $t$, and show that
\begin{align}\label{4.16}
		A:=d_1 \int_{\td g}^{+\yy}  J_1(x-y) \bar u(t,y)\rd y-d_1\bar u(t,x)\leq \frac{k(h-1)-1}{4}\bar u \mbox{ for all large } T>0.
\end{align} 

Since $\bar u(t,x)$ is a linear function of $x$ for  $x\in [\bar g,(\ln \td t)^{\alpha})$ and for $x\in ((\ln \td t)^{\alpha},+\yy)$, respectively, we have, for $x\in [\bar g,(\ln \td t)^{\alpha}-S]\cup [(\ln \td t)^{\alpha}+S,C_2 \td t/2)$ and $t>0$,
\begin{align*}
	A&=d_1 \int_{\bar g}^{+\yy}  J_1(x-y) \bar u(t,y)\rd y-d_1\bar u(t,x) \leq 	d_1 \int_{x-S}^{x+S}  J_1(x-y) \bar u(t,y)\rd y-d_1\bar u(t,x)\\
=&\ d_1 \int_{x}^{x+S}  J_1(x-y)[ \bar u(t,y)+\bar u(t,2x-y)]\rd y-d_1\bar u(t,x)\\
=&\ 2d_1 \bar u(t,x)\int_{x}^{x+S}  J_1(x-y)\rd y-d_1\bar u(t,x)=0.
\end{align*}
Hence \eqref{4.16} holds, 

For $x\in [(\ln \td t)^{\alpha}-S,(\ln \td t)^{\alpha})$ and $t>0$, we introduce the linear function (in $x$)
\begin{align*}
	 \td u_1:=	\dd\frac{M}{\td t[(\ln \td t)^{\alpha}-2\bar g]}(x-2\bar g), 
\end{align*}
and obtain, for large $T>0$,
\begin{align*}
	A=&\ d_1 \int_{\bar g}^{+\yy}  J_1(x-y) \bar u(t,y)\rd y-d_1\bar u(t,x) =d_1 \int_{x-S}^{x+S}  J_1(x-y) \bar u(t,y)\rd y-d_1\bar u(t,x)\\
	=&\ d_1 \int_{x-S}^{x+S}  J_1(x-y) \td  u_1(t,y)\rd y-d_1\bar u(t,x)+d_1 \int_{(\ln \td t)^{\alpha}}^{x+S}  J_1(x-y) [\bar  u(t,y)-\td u_1(t,y)]\rd y\\
	=&\ d_1 \int_{(\ln \td t)^{\alpha}}^{x+S}  J_1(x-y) [\bar  u(t,y)-\td u_1(t,y)]\rd y\leq d_1 \int_{(\ln \td t)^{\alpha}}^{(\ln \td t)^{\alpha}+S}  J_1(x-y) [\bar  u(t,y)-M/\td t]\rd y\\
	= &\ \dd 	\frac{1-M/\td t}{C_2\td t/2-(\ln\td t)^{\alpha}}d_1 \int_{(\ln \td t)^{\alpha}}^{(\ln \td t)^{\alpha}+S}  J_1(x-y)[y-(\ln \td t)^{\alpha}] \rd y\\
	\leq&\   \dd 	\frac{d_1 S(1-M/\td t)}{C_2\td t/2-(\ln\td t)^{\alpha}}\leq  \dd 	\frac{4d_1 S}{C_2\td t}.
\end{align*}
On the other hand, by the definition of $\bar u$,  for large $T>0$, we have
\begin{align*}
	&\frac{k(1-h)-1}{4}\bar u(t,x)=\frac{k(1-h)-1}{4}	\dd\frac{M}{\td t[(\ln \td t)^{\alpha}-2\bar g]}(x-2\bar g)\\
	\geq&\ \frac{k(1-h)-1}{4}	\dd\frac{M}{\td t[(\ln \td t)^{\alpha}-2\bar g]}((\ln \td t)^{\alpha}-S-2\bar g)\geq \frac{k(1-h)-1}{8\td t}M.	
\end{align*}
Thus it is clear that \eqref{4.16} holds if
\begin{align}\label{4.17}
	M\geq \frac{32d_1 S}{C_2[k(1-h)-1]}.
\end{align}

For $x\in ((\ln \td t)^{\alpha},(\ln \td t)^{\alpha}+S)$ and $t>0$, we similarly derive, for large $T>0$ and linear function (in $x$)
\begin{align*}
	 \td u_2:=	\frac{1-M/\td t}{C_2\td t/2-(\ln\td t)^{\alpha}}[x-(\ln \td t)^{\alpha}]+\frac{M}{\td t}, 
\end{align*}
\begin{align*}
	A=&\ d_1 \int_{x-S}^{x+S}  J_1(x-y) \bar u(t,y)\rd y-d_1\bar u(t,x)\\
	=&\ d_1 \int_{x-S}^{x+S}  J_1(x-y) \td  u_1(t,y)\rd y-d_1\bar u(t,x)+d_1 \int^{(\ln \td t)^{\alpha}}_{x-S}  J_1(x-y) [\bar  u(t,y)-\td u_2(t,y)]\rd y\\
	=&\ d_1 \int^{(\ln \td t)^{\alpha}}_{x-S}  J_1(x-y) [\bar u(t,y)-\td u_2(t,y)]\rd y\leq d_1 \int^{(\ln \td t)^{\alpha}}_{(\ln \td t)^{\alpha}-S} J_1(x-y) [M/\td t-\td u_2(t,y)]\rd y\\
	= &\  \dd 	\frac{1-M/\td t}{C_2\td t/2-(\ln\td t)^{\alpha}}d_1 \int_{(\ln \td t)^{\alpha}}^{(\ln \td t)^{\alpha}+S}  J_1(x-y)[y-(\ln \td t)^{\alpha}] \rd y\leq  \dd 	\frac{d_1 S(1-M/\td t)}{C_2\td t/2-(\ln\td t)^{\alpha}}\leq  \dd 	\frac{4d_1 S}{C_2\td t}.
\end{align*}
Since $\bar u(t,x)\geq M/\td t$ for $x\geq (\ln \td t)^{\alpha}$, we see immediately that  \eqref{4.16} is satisfied provided \eqref{4.17} holds. 

By \eqref{4.16} and our  estimates of $u_t$ we see immediately that \eqref{4.15} holds for large $T>0$ and $M$ satisfying \eqref{4.17}.
\medskip

{\bf Step 2}. We choose $(u_0, h_0, g_0)$ to have the desired long-time limit for $g(t)$ and $h(t)$. 

Firstly we fix $(T, \tilde L, \delta, M)$ such that $\bar g(t)$ and $\bar u(t,x)$ satisfy \eqref{4.15}. So in particular,
\[
\bar u(T_1, x)\geq 1 \mbox{ for } x\geq \frac{C_2}2 (T+t).
\]
Without loss of generality we may assume that $T$ has been chosen large enough such that 
\[
\frac{C_2}2T>S+1.
\]

We aim to choose $(u_0, g_0, h_0) $ such that $h_\infty=\infty$ and
\begin{equation}\label{T-condition}\begin{cases}
g(T_1)\geq \bar g(T_1),\ u(T_1, x)\leq \bar u(T_1, x) \mbox{ for } x\in [g(T_1), \min\{h(T_1), \frac{C_2}2(T+T_1)],\\
u(1-u-kv)\leq -\frac{k(1-h)-1}2 u \mbox{ for }  t>T_1, \ x\in [\bar g(t), \min\{h(t), \frac{C_2}2(T+t)\}].
\end{cases}
\end{equation}
If these inequalities are proven, then we can use the comparison principle to conclude that
\[
g(t)\geq \bar g(t),\ u(t, x)\leq \bar u(t, x) \mbox{ for } t>T_1, \ x\in [g(t), \min\{h(t), \frac{C_2}2(T+t)\}],
\]
which clearly implies $g_\infty>-\infty$. 

To complete the proof, it remains to choose $(u_0, g_0, h_0)$ such that $h_\infty=\infty$ and all the inequalities in \eqref{T-condition} hold.

Since $\limsup_{t\to\infty} u(t,x)\leq 1$ uniformly in $x$,  for any given small $\bar\epsilon>0$, we can find
 $T_0>0$ large such that 
 \[
 v_t\geq d_2\int_\R J_2(x-y)v(t,y)dy-d_2 v+\gamma v(1-v-h(1+\bar\epsilon)) \mbox{ for } t\geq T_0,\ x\in\R.
 \]
 It follows that $v(T_0+t,x)\geq V(t,x)$ where $V$ is the unique solution of \eqref{w} with $(d, J, a, b, w_0)=(d_2, J_2, \gamma[1-h(1+\epsilon)], \gamma, v(T_0, x))$.
 By Proposition E in Section 1, for any small $\tilde\epsilon>0$,
 \[
 \lim_{t\to\infty}\max_{|x|\leq (C_2-\tilde\epsilon)t}|V(t,x)-1-h(1+\epsilon)|=0.
 \]
 Therefore we can find $T_1>T_0$ such that
\begin{align*}
v(t,x)>1-h-\bar\epsilon\ \mbox{ for } \  t\geq T_1,\  |x|\leq \frac{3C_2 }{4} (T+t).
\end{align*}
This clearly implies the validity of the inequality in the second line of \eqref{T-condition}.

Since $\bar g(T_1)=-\tilde L+\frac\delta{(\ln T_1)^{\alpha-1}}$, we may assume that $T_1$ is large enough to also guarantee
\[
\bar g(T_1)<0.
\]

 Since $\bar u(T, x)>0$ for $x\in [\bar g(T_1), \min\{h(T_1), \frac{C_2}2(T+T_1)\}]$, there exists $\epsilon_1>0$ such that
\[
\mbox{$\bar u(T, x)\geq \epsilon_1$ for $x\in [\bar g(T_1), \min\{h(T_1), \frac{C_2}2(T+T_1)\}]$.}
\]
By Proposition E, equation \eqref{w} with $(d, J, a, b)=(d_1, J_1, 1,1)$ has a traveling wave $\phi_1(x)$ with speed $C_1$. Let $\tilde L_0>0$ be chosen such that 
\[
\mbox{$\phi_1(x)<\tilde\epsilon_1:=\min\{\epsilon_1, \frac 1{\mu ST_1}\}$ for $x\geq \tilde L_0$. }
\]
We then fix $\hat L_0>0$ such that
\[
\bar g(T_1)-C_1T_1+\hat L_0>\tilde L_0.
\]

Let
$\underline h(t)$ and $\underline u(t,x)$ be defined as in the proof of Theorem \ref{th1.3} so that \eqref{4.12} holds, and note that \eqref{4.12} holds for every large $L$
used in the definition of $\underline h(t)$. We recall that
\[
\underline h(0)=L, \ \underline u(0,x)=\begin{cases}(1-\epsilon)\phi(x-L) &\mbox{ for } x\in [KL, L],\\
(1-\epsilon)\phi((2K-1)L-x)& \mbox{ for } x\in [(2K-1)L, KL].
\end{cases}
\]
Moreover, $(2K-1)L>\frac{C_2}{c_1^\rho}L$, where $c_1^\rho$ is defined in the proof of Theorem \ref{th1.3}.

Let $L_1>0$ be chosen such that $\phi_1(x)\geq 1-\epsilon$ for $x\leq -L_1$. We now assume that $L$ in the definition of $\underline h(t)$ is chosen so large that
apart from \eqref{4.12} we also have
\[\begin{cases}
L>\frac{C_2}2(T+T_1),\\
(2K-1)L>\bar g(T_1)+\epsilon_1\mu ST_1+S,\\
 (2K-1)L\geq L_1+L_0+C_1T_1+  \frac{C_2}2(T+T_1).
 \end{cases}
\]

Define
\[
U(t,x):=\phi_1(-x-C_1t-\hat L_1),\ \hat L_1:=L_1-(2K-1)L.
\]
Clearly for $x\in [(2K-1)L, L]$, we have
\[
U(0,x)\geq \phi_1(-(2K-1)L-\hat L_1)=\phi_1(-L_1)\geq 1-\epsilon> \underline u(0, x).
\]

We now fix $h_0\geq L$, and note that the above estimate for $U(0, x)$ and $(2K-1)L>\bar g(T_1)+\epsilon_1\mu S T_1+S$ allow us to choose $(u_0, g_0)$ such that
\begin{equation}\label{u_0}
\begin{cases}
g_0:=\bar g(T_1)+\tilde \epsilon_1\mu ST_1,\\
u_0(x)\leq U(0, x)& \mbox{ for } x\in [g_0, h_0],\\
u_0(x)\geq \underline u(0, x) &\mbox{ for } x\in [(2K-1)L, L].
\end{cases}
\end{equation}

By the proof of Theorem \ref{th1.3}, the third inequality in \eqref{u_0} guarantees that $h_\infty=\infty$. Moreover, we note that
\[
h(t)\geq h_0\geq L>\frac{C_2}2(T+t) \mbox{ for } t\in [0, T_1].
\]

Since $U(t,x)$ satisfies
\[
U_t=d_1\int_{\R} J_1(x-y)U(t,y)dy-d_1U+U(1-U) \mbox{ for } t>0,\ x\in\R,
\]
while
\[
u_t\leq d_1\int_{\R} J_1(x-y)u(t,y)dy-d_1u+u(1-u) \mbox{ for } t>0,\ x\in (g(t), h(t)),
\]
by the comparison principle and the first inequality in \eqref{u_0} we have
\[
u(t,x)\leq U(t, x) \mbox{ for } t>0, \ x\in (g(t), h(t)).
\]
Hence for $t\in [0, T_1]$ and $x\in [g(t),  \frac{C_2}2(T+t)]$, we have
\[\begin{aligned}
u(t, x)\leq U(t, x)&=\phi_1(-x-C_1t-\hat L)\\
&\leq \phi_1(-\frac{C_2}2(T+t)-C_1t-\hat L)\\
&=\phi_1(-\frac{C_2}2(T+T_1)-C_1T_1-L_1+(2K-1)L)\\
&\leq \phi_1(L_0)\leq \tilde\epsilon_1.
\end{aligned}
\]
In particular, for $x\in [g(T_1),  \frac{C_2}2(T+T_1)]$, we have
\[\begin{aligned}
u(T_1, x)\leq  \tilde \epsilon_1\leq \bar u(T_1, x).
\end{aligned}
\]

We now estimate $g(T_1)$. For $t\in (0, T_1]$, 
\[
g(t)+S\leq g_0+S=\bar g(T_1)+\tilde \epsilon_1\mu ST_1+S<1+S<\frac{C_2}2(T+t),
\]
and so $u(t,x)\leq \tilde\epsilon_1$ for $x\in [g(t), g(t)+S]$. It follows that, for $t\in (0, T_1]$,
\[\begin{aligned}
g'(t)&=-\mu\int_{-\infty}^{g(t)}\int_{g(t)}^{h(t)}J_1(x-y)u(t,y)dydx\\
&=-\mu\int_{-\infty}^{g(t)}\int_{g(t)}^{g(t)+S}J_1(x-y)u(t,y)dydx\\
&\geq -\mu S\tilde \epsilon_1.
\end{aligned}
\]
Therefore
\[
g(T_1)\geq g_0-\mu S\tilde \epsilon_1T_1=\bar g(T_1).
\]

Now all the inequalities in \eqref{T-condition} are satisfied and
the proof of the theorem is complete.  
	\hfill $\Box$

\begin{remark}\label{rm:4.2}
From the above proof of Theorem \ref{th1.4} we can easily obtain the following estimates:
	\[\begin{cases}
	\dd\limsup_{t\to\infty}\frac{h(t)}t\leq c_1,\\
	\dd\liminf_{t\to\infty}\frac{h(t)}t\geq c_1^\rho,
	\end{cases} \ \ 
	\begin{cases}
	\dd\liminf_{t\to\infty} \min_{(C_2+\epsilon )t\leq x\leq (c^\rho_1-3\epsilon)t}u(t,x)\geq 1-\epsilon,\\
	\dd\lim_{t\to\infty} \max_{|x|\geq (C_2+\epsilon)t}v(t,x)=0,
	\end{cases}
	\]
	 and for any  given positive function $\xi(t)=o(t)$ as $t\to\infty$, 
	\[\begin{cases}
	\dd\lim_{t\to\infty} \max_{x\in [g(t), \xi(t)]}u(t,x)=0,\\
	\dd\lim_{t\to\infty} \min_{-(C_2-\epsilon)t\leq x\leq \xi(t)}v(t,x)=1.
	\end{cases}
	\]
\end{remark}

\section{General criteria for spreading and vanishing}

In this section, we prove Theorem \ref{th1.1}, which gives sharp conditions for spreading or vanishing to happen. Theorem \ref{v-1} will follow from one of the lemmas used to prove Theorem \ref{th1.1}.

The comparison principle infers that if vanishing happens for a particular value $\mu_0$ of $\mu$, then it happens also for any $\mu<\mu_0$. Similarly, if spreading happens for $\mu=\mu_1$, then it also happens for $\mu>\mu_1$.

To emphasize the dependence of the solution $(u,v, g,h)$ on the parameter $\mu$, we denote $(u,v,g,h)=(u^\mu, v^\mu, g^\mu,h^\mu)$, and similarly $g_\infty=g_\infty^\mu$ and $h_\infty=h_\infty^\mu$.  Define
\begin{align}\label{mu1}
 \Omega^{\mu}:=\{\mu>0: h_\infty^\mu-g_\infty^\mu<\infty\},\ \mu_*:=\begin{cases}\sup \Omega^{\mu} &\mbox{ if } \Omega^\mu\not=\emptyset,\\ 0 &\mbox{ if } \Omega^\mu=\emptyset.\end{cases}
\end{align}
\begin{lemma}\label{lem:mu_*}
 The following conclusions hold true:
 \begin{itemize}
 	\item[{\rm (i)}] $\mu_*=0$ implies $h^\mu_\infty-g^\mu_\infty=\yy$  for every $\mu>0$;
 	\item[{\rm (ii)}] $\mu_*\in (0, \infty)$ implies $h^\mu_\infty-g^\mu_\infty<\yy$  for every $\mu\in (0,\mu_*]$, and $h_\infty-g_\infty=\yy$  for every $\mu> \mu_*$;
 	\item[{\rm (iii)}] $\mu_*=\yy$ implies $h^\mu_\infty-g^\mu_\infty<\yy$  for every $\mu>0$.
 \end{itemize}
\end{lemma}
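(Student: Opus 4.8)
\textbf{Proof plan for Lemma \ref{lem:mu_*}.}

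The plan is to prove all three items directly from the definition of $\mu_*$ in \eqref{mu1} together with the monotonicity statement recalled just above the lemma, namely that if vanishing (i.e. $h_\infty^\mu - g_\infty^\mu < \infty$) happens for some $\mu_0$, then it also happens for every $\mu < \mu_0$; equivalently, $\Omega^\mu$ is an interval of the form $(0,\mu_*)$ or $(0,\mu_*]$ (possibly empty, possibly all of $(0,\infty)$). This monotonicity is the only structural input; everything else is bookkeeping about suprema, so I expect the main (mild) obstacle to be the borderline case $\mu = \mu_*$ in item (ii), where one must decide whether $\mu_*$ itself belongs to $\Omega^\mu$.

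For item (i): if $\mu_* = 0$, then by \eqref{mu1} either $\Omega^\mu = \emptyset$, or $\Omega^\mu \neq \emptyset$ with $\sup \Omega^\mu = 0$. The latter is impossible since $\Omega^\mu \subset (0,\infty)$ would then be a nonempty set of positive numbers with supremum $0$. Hence $\Omega^\mu = \emptyset$, which is precisely the statement $h_\infty^\mu - g_\infty^\mu = \infty$ for every $\mu > 0$. For item (iii): if $\mu_* = \infty$, then $\Omega^\mu \neq \emptyset$ and $\sup \Omega^\mu = \infty$, so for any given $\mu > 0$ there is $\mu_0 \in \Omega^\mu$ with $\mu_0 > \mu$; monotonicity then forces $\mu \in \Omega^\mu$, i.e. $h_\infty^\mu - g_\infty^\mu < \infty$, and since $\mu > 0$ was arbitrary we are done.

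For item (ii): suppose $\mu_* \in (0,\infty)$. First, for $\mu > \mu_*$ we have $\mu \notin \Omega^\mu$ (as $\mu$ exceeds $\sup\Omega^\mu$), which is exactly $h_\infty^\mu - g_\infty^\mu = \infty$. Next, for $\mu < \mu_*$, choose $\mu_0 \in \Omega^\mu$ with $\mu < \mu_0 \le \mu_*$ (possible since $\mu_* = \sup \Omega^\mu$ and $\mu < \mu_*$); monotonicity gives $\mu \in \Omega^\mu$, i.e. $h_\infty^\mu - g_\infty^\mu < \infty$. It remains to treat $\mu = \mu_*$ itself, and here I would invoke the continuity/closedness of the vanishing regime — concretely, the fact (used elsewhere in the paper, and following from Theorem A together with continuous dependence of $(u^\mu, v^\mu, g^\mu, h^\mu)$ and of $\lambda_p(\mathcal L_{(g_\infty^\mu, h_\infty^\mu)})$ on $\mu$) that $\Omega^\mu$ is \emph{closed} in $(0,\infty)$: if $h_\infty^{\mu_n} - g_\infty^{\mu_n} < \infty$ for $\mu_n \uparrow \mu_*$, then passing to the limit (the intervals $(g_\infty^{\mu_n}, h_\infty^{\mu_n})$ are monotone in $n$ and bounded, hence converge) yields $h_\infty^{\mu_*} - g_\infty^{\mu_*} < \infty$ as well. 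Thus $\mu_* \in \Omega^\mu$, which completes the chain $h_\infty^\mu - g_\infty^\mu < \infty$ for all $\mu \in (0,\mu_*]$ and $= \infty$ for all $\mu > \mu_*$. If the paper prefers to avoid the closedness argument, an alternative is to observe that the conclusion of the lemma as stated only needs $\Omega^\mu = (0,\mu_*]$ for the borderline, which can be taken as part of the definition once one proves $\mu_* = \max \Omega^\mu$; I would present whichever is cleaner given what has already been established, but the continuous-dependence route is the robust one.
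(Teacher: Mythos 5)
Your handling of items (i), (iii), and the non-borderline parts of (ii) is correct and amounts to exactly the bookkeeping the paper intends when it says the conclusions "follow directly from the definition of $\mu_*$." The sole nontrivial point, as you rightly diagnose, is $\mu=\mu_*$ in (ii), and here the paper simply invokes "a similar discussion as in the proof of Lemma 3.14 of Cao--Du--Li--Li." Your proposed closedness argument, however, has a genuine gap.

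You argue that the intervals $(g_\infty^{\mu_n},h_\infty^{\mu_n})$ are "monotone in $n$ and bounded, hence converge." Monotonicity is fine (comparison in $\mu$), but boundedness is precisely what needs justification: each interval is finite because $\mu_n\in\Omega^\mu$, but the supremum $\sup_n \big(h_\infty^{\mu_n}-g_\infty^{\mu_n}\big)$ could a priori be $+\infty$, in which case your limit passage breaks down. You trace the bound to "Theorem A together with continuous dependence," and indeed Theorem A delivers a uniform bound when $k<1$: condition \eqref{3.3} gives $\lambda_p(\mathcal{L}_{(g_\infty^{\mu_n},h_\infty^{\mu_n})})\le k-1<0$, which via Proposition C forces $h_\infty^{\mu_n}-g_\infty^{\mu_n}\le l_{1-k}$ uniformly in $n$. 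But Lemma \ref{lem:mu_*} is also the ingredient behind Theorem \ref{th1.1}(iv), i.e.\ $k\ge 1$, and there \eqref{3.3} is vacuous ($d_1>1-k$ automatically, and $\lambda_p<0\le k-1$ always), so no uniform bound on $h_\infty^{\mu_n}-g_\infty^{\mu_n}$ is available. Your argument therefore does not cover the regime in which the lemma is actually needed.

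The robust argument, and the one the cited Cao et al.\ lemma uses, is a contradiction in the opposite direction: assume spreading at $\mu=\mu_*$; then at some finite time $T$ the pair $(u^{\mu_*}(T,\cdot),g^{\mu_*}(T),h^{\mu_*}(T))$ satisfies a finite-time condition that guarantees spreading (e.g.\ it dominates a suitable compactly supported subsolution, or $h-g$ exceeds a critical length in the $k<1$ case); by continuous dependence in $\mu$ over $[0,T]$, the same condition holds for $\mu$ slightly below $\mu_*$, which then spread as well, contradicting $\mu\in\Omega^\mu$ for all $\mu<\mu_*$. This sidesteps the boundedness issue entirely. If you prefer your closedness route, you must either restrict to $k<1$ (where Theorem A supplies the bound $l_{1-k}$) or supply a uniform bound for $k\ge1$ by some other means; as written, the step "bounded, hence converge" is asserted rather than established.
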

\begin{proof}
These conclusions follow directly from the definition of $\mu_*$, except that in  part (ii),  the conclusion $h_\infty-g_\infty<\yy$ for $\mu= \mu_*$ is derived   by a similar discussion as in the proof of \cite[Lemma 3.14]{Cao-2019-JFA}.
\end{proof}

 Note that part (i) of Theorem \ref{th1.1} follows directly from Theorem A, while the first half of part (iv) is a consequence of Lemma \ref{lem:mu_*}. To complete the proof
 of Theorem \ref{th1.1}, it remains to consider the following two cases:
\begin{align*}
	d_1 > 1 - k>0\ \  \mbox{ and }\ \  k>1>h.
\end{align*}

\begin{lemma}\label{l4.3}
Suppose $d_1 > 1 - k>0$.
\begin{itemize}
	\item[{\rm (i)}]  If $2h_0 \geq l_{1-k}$,  then $\mu_*=0$.
		\item[{\rm (ii)}]   If $2h_0 < l_{1-k}$, then  $\mu_*\in [0,\yy)$. 			
		\item[{\rm (iii)}]   If $d_1>1$ and $2h_0<l_1$, then $\mu_*\in (0, \infty)$.
			
\end{itemize}
\end{lemma}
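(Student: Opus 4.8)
The plan is to establish each of the three parts by constructing suitable sub- or super-solutions and invoking the monotonicity of spreading/vanishing in $\mu$ together with the eigenvalue properties in Proposition C. For part (i), I would show that spreading occurs for \emph{every} $\mu>0$, so that $\Omega^\mu=\emptyset$ and hence $\mu_*=0$. The key point is that when $2h_0\geq l_{1-k}$, the free boundary interval $(g(t),h(t))\supseteq(-h_0,h_0)$ has length at least $l_{1-k}$, so by Proposition C(ii), $\lambda_p(\mathcal L_{(g(t),h(t))})\geq \lambda_p(\mathcal L_{(-h_0,h_0)})\geq -(1-k)=k-1$. By the contrapositive of Theorem A, this rules out $h_\infty-g_\infty<\infty$ provided one can also control $v$: one needs an upper bound $kv(t,x)\leq k-1+\delta$ (roughly) on the relevant region so that the effective growth rate of $u$ dominates $\lambda_p$. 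Here I would use that, since $u$ is bounded, $v$ satisfies a logistic-type inequality from below and a crude comparison gives $\limsup v\leq 1$; more carefully, one compares $u$ from below with the solution of the nonlocal problem on the fixed interval $(-h_0,h_0)$ with growth $u(1-u-k\bar v)$ for an appropriate constant bound $\bar v$, and uses that $\lambda_p+1-k\bar v>0$ when $\bar v$ is close enough to... — wait, this needs care since $k\bar v$ near $k$ would kill the sign. The correct route is the one already set up in Theorem A: if $h_\infty-g_\infty<\infty$ then \eqref{vanishing} holds, so $v(t,x)\to 1$ locally uniformly off $(g_\infty,h_\infty)$ and $\int u\to 0$; then a contradiction is derived because on the fixed interval $(-h_0,h_0)$ the linearized operator $\mathcal L_{(-h_0,h_0)}+(1-k)$ has nonnegative principal eigenvalue while $u$ is being driven to $0$, which forces, via a lower-solution argument with the positive eigenfunction, that $u$ stays bounded below — contradiction. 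So part (i) follows from Theorem A essentially by quoting it; the only work is noting $\lambda_p(\mathcal L_{(-h_0,h_0)})\geq k-1$ already contradicts \eqref{3.3} when the inequality is strict, and handling the borderline equality via the strict monotonicity in Proposition C(ii) applied to the strictly larger interval $(g_\infty,h_\infty)\supsetneq(-h_0,h_0)$.

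For part (ii), since by part (i)-type reasoning we no longer have automatic spreading, I would show $\mu_*<\infty$, i.e. spreading occurs for all large $\mu$. The standard mechanism: for $\mu$ large, construct a lower solution $(\underline u,\underline g,\underline h)$ whose support expands, using that the free-boundary speed $h'(t)=\mu\int\!\!\int J_1 u$ is large when $\mu$ is large, so that $h(t)-g(t)$ reaches length $l_{1-k}+1$ in finite time before $u$ can decay, after which part (i)'s argument (applied from that later time as a new initial time) yields spreading. Concretely I would fix a time $t_1$, show $u(t_1,\cdot)$ is bounded below on $[-h_0/2,h_0/2]$ uniformly in $\mu$ (by comparison with a $\mu$-independent lower solution on a fixed small interval), then estimate $h'(t)\geq c\mu$ on $[t_0,t_1]$ for suitable $c>0$, so $h(t_1)-g(t_1)\geq 2h_0+c\mu t_1\cdot(\text{something})\geq l_{1-k}$ once $\mu$ is large. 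This is the main obstacle of the lemma: making the lower bound on $u$ genuinely independent of $\mu$ and showing the boundary moves fast enough before $v$ grows to obstruct $u$; one typically needs the lower solution on a fixed interval with $\mu=0$ there, noting its boundaries don't move but $u$ stays positive, and only then switch on the large $\mu$ for the boundary motion. I expect this to require a two-stage comparison and careful bookkeeping of times.

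For part (iii), under the stronger hypotheses $d_1>1$ and $2h_0<l_1$, I would show $\mu_*>0$, i.e. vanishing occurs for all small $\mu$, so $\Omega^\mu\neq\emptyset$ and $\mu_*>0$; combined with (ii) this gives $\mu_*\in(0,\infty)$. For small $\mu$ the free boundaries barely move, so one constructs an upper solution $(\bar u,\bar g,\bar h)$ confined to an interval of length slightly less than $l_1$: precisely, choose $\ell\in(2h_0,l_1)$ so that $\lambda_p(\mathcal L_{(0,\ell)})<-1$ (possible since $2h_0<l_1$ means $\lambda_p(\mathcal L_{(0,2h_0)})<-1$ and by continuity/monotonicity we keep $\lambda_p<-1$ for $\ell$ slightly larger), let $\varphi>0$ be the corresponding eigenfunction, and set $\bar u(t,x)=M e^{-\sigma t}\varphi(x)$ on a fixed interval $(\bar g,\bar h)$ of length $\ell$ for suitable $\sigma>0$ with $\lambda_p<-\sigma<-1<... $; the term $\bar u(1-\bar u-kv)\leq \bar u$ is absorbed because $\lambda_p(\mathcal L_{(\bar g,\bar h)})\leq -\sigma$ and $-\sigma+1<0$. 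Then the free-boundary equation for $\bar h$ requires $\bar h'(t)\geq \mu\int\!\!\int J_1\bar u$, and since $\bar u$ decays exponentially, $\int_0^\infty\!\bar h'<\infty$ provided $\mu$ is small enough that the total displacement stays below $\tfrac12(\ell-2h_0)$; one must also arrange $\bar u(0,x)\geq u_0(x)$ by taking $M$ large, which is fine since $M$ enters the boundary displacement linearly and can be compensated by shrinking $\mu$. The delicate point is the interplay $\mu M$ in the boundary bound versus needing $M$ large for the initial comparison and $\mu$ small for confinement; since both the displacement bound and the requirement are proportional, one fixes $M=M(u_0)$ first and then chooses $\mu$ small, so it closes. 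The key structural input is $d_1>1$, which is what makes $\lambda_p(\mathcal L_{(0,\ell)})<-1$ achievable (recall $\lambda_p\to -d_1<-1$ as $\ell\to 0$ by Proposition C(iv)), together with $2h_0<l_1$ ensuring the initial interval is already in the subcritical regime.
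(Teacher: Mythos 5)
Your plan is essentially the paper's argument for all three parts: (i) argue by contradiction from Theorem A's necessary condition $\lambda_p(\mathcal L_{(g_\infty,h_\infty)})\le k-1$ plus Proposition C's strict monotonicity on an interval strictly containing $(-h_0,h_0)$; (ii) show that for large $\mu$ the interval length reaches $l_{1-k}$ at some finite time and then invoke (i) (the paper delegates the quantitative step to \cite[Lemma 3.9]{DY-2022-DCDS} rather than carrying out the two-stage comparison you sketch, but the mechanism is the same); and (iii) build a contracting exponential upper solution $Me^{-\delta t}\phi(x)$ on a slightly inflated interval with $\lambda_p<-1$, choosing $M\sim 1/\mu$ so that the boundary displacement $\mu M\int\phi$ stays bounded while the initial comparison holds for small $\mu$. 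The only caveat is your phrasing in (iii) of ``fix $M=M(u_0)$ first, then shrink $\mu$''—the paper instead couples $M$ to $1/\mu$ so that the product $\mu M$ is $\mu$-independent, which is the cleaner way to resolve the tension you identified.
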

\begin{proof}
{\rm (i)} Arguing indirectly, we suppose that for some $\mu>0$, vanishing happens, namely $h_\infty - g_\infty <\yy$. Then by \eqref{3.3} we obtain $\lambda_p(\mathcal{L}_{(g_\infty, h_\infty)}) \leq k-1$. 
Since $h_\infty-g_\infty>2h_0\geq l_{1-k}$, by Proposition C  we deduce  $\lambda_p(\mathcal{L}_{(g_\infty, h_\infty)}) > k-1$. This  contradiction shows that $h_\infty - g_\infty =\yy$
always holds. 
\medskip

{\rm (ii)} It suffices to prove that  $h_\infty - g_\infty = \infty$  for all large $\mu$.
In view of the conclusion (i) above, it is sufficient to find some $t_0 > 0$ such that
\begin{align}\label{4.1}
	h(t_0)-g(t_0)\geq l_{1-k},
\end{align}
which  is a consequence of \cite[Lemma 3.9]{DY-2022-DCDS}. Indeed, from the equation for $u$ we obtain, for  some positive  constant $C>1+\max_{x\in [-h_0,h_0]}u_0(x)+\max_{x\in \R}kv_0(x)$,
\begin{align}
	\begin{cases}
	\dd	u_{t}(t, x) \geq d_1\int_{g(t)}^{h(t)}J_1(x - y)u(t, y)\rd y - d_1u - Cu,   & t > 0, ~ g(t) < x < h(t), \\
		u(t, g(t)) = u(t, h(t)) = 0, &t\geq 0,\\
\dd		h^\prime(t) \geq \mu\int_{g(t)}^{h(t)}\int_{h(t)}^{\infty}J_1(x - y)u(t, x)\rd y\rd x, &t\geq 0,\\
	\dd	g^\prime(t) \leq - \mu\int_{g(t)}^{h(t)}\int_{-\infty}^{g(t)}J_1(x - y)u(t, x)\rd y\rd x, &t\geq 0,\\
		u(0, x) = u_{0}(x), &
		\vert x \vert\leq h_0,\\
		h(0) = -g(0) = h_0. &
		\nonumber
	\end{cases}
\end{align}
Hence we can use \cite[Lemma 3.9]{DY-2022-DCDS}  to conclude that for any given $t_0>0$, there exists $\mu_1>0$ such that  \eqref{4.1} holds for all $\mu >\mu_1$.  

\medskip

(iii) Suppose  $d_1>1$ and $2h_0<l_1$. Since $l_1<l_{1-k}$ by (ii) we have $\mu_*<\infty$.  It remains to show that $h_\infty - g_\infty < \infty$  for sufficient small $\mu$. 
We will demonstrate this by constructing an upper solution following the method of  \cite[Theorem 3.12]{Cao-2019-JFA} and \cite[Lemma 4.7]{DuNi2020N}. Since $2h_0 < l_1$, we have  $\lambda_p^\epsilon:=\lambda_p(\mathcal{L}_{(-h_0-\epsilon, h_0+\epsilon)} )<-1$ for small $\epsilon>0$ satisfying $2h_0+2\epsilon<l_1$. Let $\phi=\phi_\epsilon$ be a positive eigenfunction  corresponding to $\lambda_p^\epsilon$.  
Define, for $t\geq 0$, $x\in [-h_0-\epsilon, h_1+\epsilon]$,
 \begin{align*}
 	&\bar h(t):=h_0+\epsilon (1-e^{-\delta t}),\ \  \bar g(t):=-\ol h(t) \ \ {\rm and}\ \ \bar u(t,x):=Me^{-\delta t} \phi(x)
 \end{align*}
with
\begin{align*}
	 \delta:=-\lambda_p^\epsilon/2>0, \; M:= \delta \frac{\epsilon}{\mu}\Big(\!\int_{-h_0-\epsilon}^{h_0+\epsilon}\phi(x)\rd x\Big)^{-1}>0.
\end{align*}
 Clearly $\ol h(t)\in [h_0, h_0+\epsilon)\subset [h_0, l_*)$ for $t\geq 0$.   We next show  that for all small $\mu>0$,  $(\bar u,\bar g,\bar h)$ is an upper solution to the problem satisfied by  $(u,g,h)$, when $v(t,x)$ is viewed as a known function. If this is proved, then it follows from the comparison principle, see   Lemma \ref{lemmacomp1}, that 
 \[
 [g(t), h(t)]\subset[\bar g(t), \bar h(t)] \mbox{ and hence } h_\infty-g_\infty\leq \bar h(\infty)-\bar g(\infty)=2h_0+2\epsilon,
 \]
 as desired. 

 It remains to prove that $(\bar u,\bar g,\bar h)$ is an upper solution.  A simple computation gives
 \begin{align*}
 	&\dd \bar u_t-d_1 \int_{\bar g(t)}^{\bar h(t)}J_1(x-y)\bar u(t,y)\rd y+d_1\bar u(t,x)-\bar u(1-\bar u-kv)\\
 	\geq\ & \dd \bar u_t-d_1 \int_{-h_0-\epsilon}^{h_0+\epsilon}J_1(x-y)\bar u(t,y)\rd y+d_1\bar u(t,x)-\bar u\\
 	=\ &-\delta \bar u-\lambda_p^\epsilon\bar u=\delta\bar u\geq 0\ \ \ \ \ \ \ {\rm for}\  t>0, \; x\in  (\bar g(t),\bar h(t)).
 \end{align*}
 Recalling that   $[\bar g(t),\bar h(t)]\subset (-h_0-\epsilon,h_0+\epsilon)$, we further deduce
 \begin{align*}
 	\mu \int_{\bar g(t)}^{\bar h(t)}\int_{\bar h(t)}^{\yy}J_1(x-y)\bar u(t,x)\rd y\rd x\leq\ & 
 	\mu \int_{\bar g(t)}^{\bar h(t)}\bar u(t,x)\rd x=\mu Me^{-\delta t} \int_{\bar g(t)}^{\bar h(t)}\phi(x)\rd x\\
 	\leq\ & \mu Me^{-\delta t} \int_{-h_0-\epsilon}^{h_0+\epsilon}\phi(x)\rd x= \epsilon \delta e^{-\delta t}=\bar h'(t) \mbox{ for } t> 0,
 \end{align*}
 and by symmetry,
 \begin{align*}
 	-\mu \int_{\bar g(t)}^{\bar h(t)}\int_{-\yy}^{\bar g(t)}J_1(x-y)\bar u(t,x)\rd y\rd x\geq \bar g'(t)\ \mbox{ for } \ t> 0. 
 \end{align*}
 It is clear that $\bar u(t,\bar g(t))>0$ and $\bar u(t,\bar h(t))>0$ for all $t\geq 0$, and by appropriately selecting a small value for $\mu>0$, we can make $M$ as large 
 as we want and hence ensure that $u_0(x) \leq \bar{u}(0, x)$ for $x \in [-h_0, h_0]$. 
 The above calculations indicate that  $(\bar u,\bar g,\bar h)$ is an upper solution for $(u,g,h)$, as we wanted. 
 \end{proof}
 
 \begin{lemma}\label{lem:u-inf}
	If   $k> 1>h$, then  for any $(u_0,v_0)$ satisfying \eqref{KnK1.3}, and any $L>0$, we have
\begin{equation}\label{0-1} \begin{cases}
\lim_{t\to\infty} u(t,x)=0 &\mbox{ uniformly for } x\in [-L, L],\\
\lim_{t\to\infty} v(t,x)=1 &\mbox{ uniformly for } x\in [-L, L].
\end{cases}
\end{equation}
Moreover, $h_\infty-g_\infty<\infty$ for all small $\mu>0$, and hence
	$\mu_*>0$.
\end{lemma}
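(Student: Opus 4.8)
The plan is to prove the two limits in \eqref{0-1} first (this is exactly Theorem~\ref{v-1}), by a Lotka--Volterra type iteration adapted to the nonlocal free‑boundary setting, and then to obtain $h_\infty-g_\infty<\infty$ for small $\mu$ by running essentially the same iteration in a regime where both free boundaries move slowly. \emph{Step 1 (a priori bounds and the iteration).} Extending $u$ by $0$ outside $[g(t),h(t)]$, $u$ is a subsolution on $\R$ of $U_t=d_1[\int_\R J_1(\cdot-y)U\,\rd y-U]+U(1-U)$, and $v$ a subsolution of the analogous equation with $(d_1,J_1,1)$ replaced by $(d_2,J_2,\gamma)$; comparing with the spatially homogeneous solutions gives $\limsup_{t\to\infty}u(t,x)\le1$ and $\limsup_{t\to\infty}v(t,x)\le1$, uniformly in $x\in\R$. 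Put $\bar u_0:=1$ and, for $n\ge0$,
\[
\underline v_{n+1}:=1-h\bar u_n,\qquad \bar u_{n+1}:=\max\{1-k\underline v_{n+1},\,0\}.
\]
Using $0<h<1$, $\{\bar u_n\}$ is nonincreasing, $\{\underline v_n\}$ nondecreasing and $\underline v_1=1-h>0$. Here is where $k>1$ is used: if $\bar u_n>0$ for every $n$, then $1-k\underline v_n>0$ for all $n$ and $\underline v_{n+1}=1-h+hk\underline v_n$ forces $\underline v_n\to\frac{1-h}{1-hk}>1$ when $hk<1$, and $\underline v_n\to\infty$ when $hk\ge1$; since Step~2 will show $\liminf_{t\to\infty}v\ge\underline v_n$ on bounded sets for every $n$, and $\limsup_{t\to\infty}v\le1$, this is impossible, so there is a finite $N=N(k,h)$ with $\bar u_N=0$.

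\emph{Step 2 (transporting the iteration to the PDE).} I prove by induction that for every $R>0$ and every $n\ge0$,
\[
\limsup_{t\to\infty}\max_{|x|\le R}u(t,x)\le\bar u_n,\qquad \liminf_{t\to\infty}\min_{|x|\le R}v(t,x)\ge\underline v_n,
\]
the case $n=0$ being Step~1. For the step $n\to n+1$, fix a small $\epsilon>0$ and a large buffer $M>0$, and set $R':=R+M$. From $\limsup_{t\to\infty}\max_{|x|\le R'}u\le\bar u_n$ and $u\le1+\epsilon$ globally, the function $\rho_\epsilon(x):=\bar u_n+\epsilon$ on $\{|x|\le R'\}$ and $:=1+\epsilon$ otherwise dominates $u$ for large $t$, so $v$ is eventually a supersolution on $\R$ of the nonlocal logistic equation with reaction $\gamma V(1-V-h\rho_\epsilon(x))$; comparing with the solution of the latter started from a positive compactly supported minorant of $v$ (which tends locally uniformly to the unique positive bounded steady state $\Psi^v_\epsilon$), and using that $\Psi^v_\epsilon\ge1-h\bar u_n-C\epsilon=\underline v_{n+1}-C\epsilon$ on $\{|x|\le R\}$ once $M$ is large, yields the $v$‑bound at level $n+1$. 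Symmetrically, from $\liminf_{t\to\infty}\min_{|x|\le R'}v\ge\underline v_{n+1}$, the function $\sigma_\epsilon(x):=\underline v_{n+1}-\epsilon$ on $\{|x|\le R'\}$ and $:=0$ otherwise is eventually a minorant of $v$ on $(g(t),h(t))$, and since $u\equiv0$ outside, $u$ (extended by $0$) is eventually a subsolution on $\R$ of the nonlocal logistic equation with reaction $U(1-U-k\sigma_\epsilon(x))$, whose steady state $\Psi^u_\epsilon$ satisfies $\max_{|x|\le R}\Psi^u_\epsilon\le\bar u_{n+1}+C\epsilon$ for $M$ large (it decays geometrically into $\{|x|\le R'\}$ when $k\underline v_{n+1}>1$, and otherwise equals $\bar u_{n+1}+o_\epsilon(1)$ there), giving the $u$‑bound at level $n+1$. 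Applying the induction at $n=N$ gives $\lim_{t\to\infty}\max_{|x|\le L}u=0$ for every $L$; feeding this back into the $v$‑equation (now with $\rho_\epsilon=\epsilon$ on $\{|x|\le R'\}$) and using $\limsup_{t\to\infty}v\le1$ gives $v\to1$ uniformly on $[-L,L]$, proving \eqref{0-1}. This step is the main obstacle: the nonlocal diffusion couples the reaction to values of the competitor in regions where only the crude bounds are available, so one cannot freeze the competitor at a constant; the remedy of comparing with spatially inhomogeneous steady states whose boundary layers are controlled by $M$ is feasible precisely because $k>1>h$ makes the iteration terminate after finitely many steps.

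\emph{Step 3 ($\mu_*>0$).} Since $u\le\bar M:=\sup_{t\ge0}\|u(t,\cdot)\|_\infty<\infty$ with $\bar M$ independent of $\mu$, and $h'(t)\le\tfrac\mu2\int_{g(t)}^{h(t)}u(t,x)\,\rd x$, $-g'(t)\le\tfrac\mu2\int_{g(t)}^{h(t)}u(t,x)\,\rd x$, a Gronwall estimate gives $[g(t),h(t)]\subset[-h_0e^{\mu\bar M t},\,h_0e^{\mu\bar M t}]$, hence $[g(t),h(t)]\subset I:=[-h_0-2,\,h_0+2]$ on $[0,T_\mu]$ with $T_\mu:=\frac{\ln(1+2/h_0)}{\mu\bar M}\to\infty$ as $\mu\to0$. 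On $[0,T_\mu]$ the function $u$ is supported in $I$, so Steps~1--2 can be run with $u$ compared to solutions of the nonlocal logistic problem on the \emph{fixed} interval $I$ and $v$ to solutions on $\R$ with reaction involving $\mathbf 1_I$; all comparison functions, hence all timings, are then independent of $\mu$. Fixing $\epsilon<1-\tfrac1k$, this produces a $\mu$‑independent $\tau^*$ with $v(t,x)\ge1-\epsilon$ for $x\in I$ and $\tau^*\le t\le T_\mu$, so that for such $t$ and $x\in[g(t),h(t)]\subset I$,
\[
u_t\le d_1\int_{g(t)}^{h(t)}J_1(x-y)u(t,y)\,\rd y-d_1u-\sigma u,\qquad \sigma:=k(1-\epsilon)-1>0,
\]
whence $u(t,x)\le\bar M\,e^{-\sigma(t-\tau^*)}$ there. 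Choosing $\mu$ so small that $\tau^*<T_\mu$ and $[g(\tau^*),h(\tau^*)]\subset\subset I$, the exponential decay of $u$ together with the free‑boundary equations gives $h(t)\le h(\tau^*)+C\mu$ and $g(t)\ge g(\tau^*)-C\mu$ with $C$ independent of $\mu$, as long as $[g,h]\subset I$ on $[\tau^*,t]$; for $\mu$ small this keeps $[g(t),h(t)]$ away from $\partial I$, and a continuity argument then forces $[g(t),h(t)]\subset I$ for all $t\ge0$. Therefore $h_\infty-g_\infty\le2h_0+4<\infty$ for all small $\mu>0$, i.e. $(0,\mu_0)\subset\Omega^\mu$ for some $\mu_0>0$, so $\mu_*\ge\mu_0>0$. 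The delicate point here is exactly this last upgrade of the confinement $[g(t),h(t)]\subset I$ from $[0,T_\mu]$ to all $t\ge0$, which relies on the exponential decay of $u$ once $v$ has been pushed above $1/k$ near the origin.
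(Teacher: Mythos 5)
Your proof reaches the correct conclusions but takes a genuinely different path from the paper's. For the convergence \eqref{0-1}, the paper simply compares $(u,v)$ with the solution $(\bar u,\underline v)$ of the full competition Cauchy problem on $\R$ (Lemma~\ref{lemmacomp2}~(iii)), and then \emph{cites} the known iteration of \cite[Thm.~3.2]{Zwy-2022-dcds-b} which shows $(\bar u,\underline v)\to(0,1)$ locally uniformly; your Steps~1--2 instead reconstruct that iteration from scratch, freezing the competitor in a spatially inhomogeneous profile and comparing against steady states of scalar nonlocal logistic equations with $x$-dependent coefficients. This is more self-contained but leaves the hardest part at sketch level: the existence, uniqueness and attractivity of the steady states $\Psi^u_\epsilon,\Psi^v_\epsilon$, and the statement that they stabilize to the constant $\bar u_{n+1}$ (resp.\ $\underline v_{n+1}$) on $\{|x|\le R\}$ up to $o_M(1)$, are all plausible for the KPP structure you have (using $h<1$ and $k>1$) but would need to be proved (e.g.\ via a domain-limit argument as $M\to\infty$), whereas the paper avoids the inhomogeneous steady-state machinery entirely.

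For the $\mu_*>0$ claim, the two proofs diverge again. The paper reuses the Lemma~\ref{l4.3}~(iii) construction: having established $kv\ge(1+k)/2>1$ on $[-2h_0,2h_0]$ for $t\ge t_1$ (with $t_1$ coming from the $\mu$-free Cauchy comparison), it builds an explicit exponential-in-time upper solution $\bar u=Me^{-\delta t}\phi$ with moving but confined barriers $\bar g,\bar h$; this automatically enforces $[g(t),h(t)]\subset[\bar g(t),\bar h(t)]$ and so confinement, with no separate bootstrap needed. Your Step~3 uses a Gronwall bound $h(t)-g(t)\le 2h_0 e^{\mu\bar M t}$ to get a $\mu$-dependent confinement horizon $T_\mu\to\infty$, then exponential decay of $u$ once $v$ exceeds $1/k$, and finally a continuity bootstrap. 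This also works, but the $T_\mu$ detour is unnecessary: the bound $v\ge\underline v\ge 1-\epsilon$ on a fixed compact set for $t\ge\tau^*$ comes from the Cauchy-problem comparison without assuming $u$ is supported in $I$, so you do not need $t\le T_\mu$ to invoke it (you only need $T_\mu>\tau^*$, i.e.\ $\mu$ small enough, to control $[g(\tau^*),h(\tau^*)]$). As written, the restriction ``for $\tau^*\le t\le T_\mu$'' creates the appearance of a circularity in the extension past $T_\mu$; replacing it with the straightforward Cauchy comparison (already in your own Step~1) closes that gap cleanly.

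In short: the architecture is sound and both branches can be made rigorous, but the paper's proof is considerably shorter by importing the Cauchy-problem iteration as a black box and by using the eigenfunction upper solution to handle boundary confinement in one stroke.
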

\begin{proof}
	Let $\bar u,\underline v$ be the solution of 
	\begin{align*}
		\begin{cases}
			\dd	\bar u_t =  d_1\int_{\R}J_1(x - y)\bar u(t, y)\rd y - d_1\bar u + \bar u(1 - \bar u - k \underline v),   &
			t > 0,\ x\in \R,\\[4mm]
				\dd	\underline v_t =  d_2\int_{\R}J_2(x - y)\underline v(t, y)\rd y - d_2 \underline v + \gamma \underline v(1 - \underline v- h \bar u),   &
		t > 0,\ x\in \R
		\end{cases}
	\end{align*}
with $\bar u(0,x)=u_0(x)$ and $\underline v(0,x)=v_0(x)$ for $x\in \R$. Then from the comparison principle, see Lemma \ref{lemmacomp2}, we obtain
\begin{align*}
	u(t,x)\leq  \bar  u(t,x),\ v(t,x)\geq  \underline  v(t,x)\ \mbox{ for } \ t\geq 0,\ x\in \R.
\end{align*}
Following a well known iteration argument (see, for example, the proof  of Theorem 3.2 in \cite{Zwy-2022-dcds-b}) one can show that 
\begin{align*}
	\lim\limits_{t\rightarrow\infty} \bar   u(t, x) = 0, ~\lim\limits_{t\rightarrow\infty}\underline  v(t, x) = 1 {\rm ~locally~ uniformly~ in~} \mathbb{R}.
\end{align*}
This clearly implies \eqref{0-1}. Moreover,   there is $t_1>0$ such that 
\begin{align*}
	v(t,x)\geq \underline  v(t, x) \geq  \frac{1+k}{2k}\ \mbox{ for } \ t\geq t_1, \ x\in [-2h_0,2h_0]. 
\end{align*}
Note that $kv(t,x)\geq \frac{1+k}2>1$ for $t\geq t_1$ and  $x\in [-2h_0,2h_0]$. Then, as in Lemma \ref{l4.3} (iii), we could construct an upper solution for $(u,g,h)$ and show that $h_\infty - g_\infty < \infty$  for sufficient small $\mu$. The desired conclusion then follows from the comparison principle. 
\end{proof}

 Clearly Theorem \ref{v-1} follows directly from Lemma \ref{lem:u-inf}, and  Theorem \ref{th1.1} follows from Lemmas \ref{lem:mu_*}, \ref{l4.3} and \ref{lem:u-inf}.

\section{Appendix: some comparison principles}\label{PS}

Let $(u,v,g,h)$ be the solution of \eqref{KnK1.2}. For  $T>0$, $g_1, h_1\in C([0,T])$  with  $ g_1(t)<  h_1(t)$, we will use the notation
 \[
  [0,T]\times [g_1,h_1]:=\{(t,x): t\in [0,T],\ x\in [g_1(t),h_1(t)]\}.
  \]
 Let $m\in C([0,T]\times \R)$ be a bounded function and 
\begin{align*}
	F(t,x,s):=s(m(t,x)-s) \ {\rm for}\ s\geq 0,\ t\geq 0, \ x\in \R.
\end{align*}

We list below several comparison principles, whose proofs are easily obtained by following the proof in \cite{Cao-2019-JFA}, and are omitted here.

\begin{lemma}[Comparison principle 1]\label{lemmacomp1} Assume that $ \bar g,\,\bar h, \, \underline  g,\,\underline  h\in C([0,T])$ satisfy $\underline g(t)<\underline  h(t)$ and $\bar g(t)<\bar  h(t)$; the functions $\bar u\in C([0,T]\times \R)\cap C^{1,0}([0,T]\times [\bar g,\bar h])$ and $\underline u\in C([0,T]\times \R)\cap C^{1,0}([0,T]\times [\underline g,\underline h])$  are nonnegative and bounded. 
	
	{\rm (i)} {\rm \underline {(Two sides free boundaries)}} Suppose $\bar g'\leq 0 \leq \bar h'$ and $\underline g'\leq 0 \leq \underline h'$. If  $(\bar u,  \bar g,\bar h)$   satisfies 	
	\begin{align}\label{compsing1}
		\begin{cases}
			\dd	\bar u_t \geq  d_1\int_{\bar g(t)}^{\bar h(t)}J_1(x - y)\bar u(t, y)\rd y - d_1\bar u + F(t,x,\bar u),   &
			(t,x) \in  (0,T] \times (\bar g,\bar h),\\[4mm]
			\bar u(t, x) \geq   0,  &
			t \in [0,T],\ x \not\in (\bar g(t), \bar h(t)),\\
			\dd	\bar h^{\prime} (t) \geq \mu\int_{\bar g(t)}^{\bar h(t)}\int_{\bar h(t)}^{\infty}J_1(x - y)\bar u(t, x)\rd y\rd x,  &
			t \in (0,T],\\[4mm]
			\dd	\bar g^{\prime} (t) \leq  -{\mu}\int_{\bar g(t)}^{\bar h(t)}\int_{-\infty}^{\bar g(t)}J_1(x - y)\bar u(t, x)\rd y\rd x,  &
			t \in (0,T],
		\end{cases}
	\end{align}
	and $(\underline  u,  \underline g,\underline h)$ satisfies  \eqref{compsing1} with all the inequality signs reversed,  and 
	\[
	[\underline g(0),\underline h(0)]\subset [\bar g(0),\bar h(0)],\ 
	\bar u(0, x) \geq \underline u(0,x) \mbox{ for }
	x \in [\underline g(0),\underline h(0)],
	\]
	then  
	\[
	 \begin{cases}\underline  g(t),\underline h(t)]\subset [\bar g(t),\bar h(t)]\mbox{ for  $ t\in [0,T]$,} \\
		\underline u(t,x)\leq  u(t,x) \mbox{ for } \ t\in [0,T],\  x\in [\underline  g(t),\underline h(t)].
		\end{cases}
	\]

	{\rm (ii)} {\rm \underline {(One side free boundary)}} Suppose $\bar g<  \bar h$, $\bar g'\leq 0 \leq \bar h'$ and $(\bar u,  \bar g,\bar h)$   satisfies 	\eqref{compsing1}. If $\bar g(t)\leq \underline g(t)<\underline h(t)$ and $\underline h'(t)\geq 0$ for $t\in (0, T]$, and
	$(\underline u,  \underline g,\underline h)$   satisfies 	
	\begin{align*}
		\begin{cases}
			\dd	\underline u_t \leq  d_1\int_{\underline g(t)}^{\underline h(t)}J_1(x - y)\underline u(t, y)\rd y - d_1\underline u + F(t,x,\underline u),   &
			(t,x) \in  (0,T] \times (\underline g,\underline h),\\[4mm]
			\underline u(t, x) =   0,  &
			t \in [0,T],\ x \in \{ \underline g(t), \underline h(t)\},\\
			\dd	\underline h^{\prime} (t) \leq \mu\int_{\underline g(t)}^{\underline h(t)}\int_{\underline h(t)}^{\infty}J_1(x - y)\underline u(t, x)\rd y\rd x,  &
			t \in (0,T],\\
			\underline h(0)\leq \bar h(0),\ \underline u(0,x)\leq \bar  u(0,x),& x\in [\underline g(0),\underline h(0)],
		\end{cases}
	\end{align*}
	then 
	\begin{align*}
		\underline h(t)\leq \bar h(t),\ \ \ \underline u(t,x)\leq \bar  u(t,x)\ {\rm for}\ t\in [0,T]\ {\rm and}\ x\in  [\underline g(t),\underline h(t)].
	\end{align*}
	
	{\rm (iii)} {\rm \underline {(One side free boundary)}} Suppose $\underline g<  \underline h$, $\underline g'\leq 0 \leq \underline h'$ and $(\underline u,  \underline g,\underline h)$   satisfies 	\eqref{compsing1} with all the inequality signs reversed.  If $h_1\in C([0,T])$ satisfies  $h_1(t)\leq \underline h(t)$ for $t\in [0, T]$,  and $(\bar u,  \bar g,\bar h)$   satisfies 	
	\begin{align*}
		\begin{cases}
			\dd	\bar u_t \geq  d_1\int_{\bar g(t)}^{\yy}J_1(x - y)\bar u(t, y)\rd y - d_1\bar u + F(t,x,\bar u),   &
			(t,x) \in  (0,T] \times (\bar g,\yy),\\[4mm]
			\bar u(t, x) \geq \underline u(t, x) \geq    0,  &
			t \in [0,T],\ x \in\{ \bar g(t)\}\cup [ h_1(t),\yy)\\
			\dd	\bar g^{\prime} (t) \leq  -{\mu}\int_{\bar g(t)}^{\yy}\int_{-\infty}^{\bar g(t)}J_1(x - y)\bar u(t, x)\rd y\rd x,  &
			t \in (0,T],\\
			\bar g(0)\leq 	\underline g(0),\ 	\bar u(0,x)\geq \underline  u(0,x),& x\in [\bar g(0),\yy)
		\end{cases}
	\end{align*}
	then 
	\begin{align*}
		\bar g(t)\leq \underline g(t)\ {\rm for}\ t\geq 0,\ \ \ \underline u(t,x)\leq \bar  u(t,x)\ {\rm for}\ t\in [0,T]\ {\rm and}\ x\in  [\bar g(t),\yy).
	\end{align*}
	
\end{lemma}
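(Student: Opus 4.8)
The plan is to prove all three parts by the two ingredients that are standard for nonlocal free‑boundary problems of this kind: (a) a nonlocal parabolic maximum principle comparing the density difference $w:=\bar u-\underline u$ on a region where the supports are \emph{already} nested, and (b) a first‑touching‑time argument for the moving boundaries. I will describe (i); (ii) and (iii) are the one‑boundary specializations. For (a), suppose $[\underline g(t),\underline h(t)]\subset[\bar g(t),\bar h(t)]$ for $t\in[0,t_0]$. On the compact set $\{(t,x):0\le t\le t_0,\ \underline g(t)\le x\le\underline h(t)\}$, subtracting the two differential inequalities, using $\bar u\ge0$ to discard the nonnegative term $d_1\int_{[\bar g,\bar h]\setminus[\underline g,\underline h]}J_1\bar u$, and using the explicit form $F(t,x,s)=s(m-s)$, which gives $F(t,x,\bar u)-F(t,x,\underline u)=c(t,x)\,w$ with $c:=m-\bar u-\underline u$ bounded, one obtains
\[
w_t\ \ge\ d_1\!\int_{\underline g(t)}^{\underline h(t)}\!J_1(x-y)\,w(t,y)\,\rd y-d_1w+c(t,x)\,w ,
\]
while $w\ge0$ on the parabolic boundary (the subsolution vanishes at its own free boundary, $\bar u\ge0$, and $w(0,\cdot)\ge0$ on $[\underline g(0),\underline h(0)]$ by hypothesis). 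Setting $z:=w+\eta e^{\beta t}$ with $\eta>0$ and $\beta>\sup|c|$ makes the inequality strict and $z>0$ on the parabolic boundary; if $z$ ever vanished, at the first such time the minimum value $0$ would be attained at an interior point where $z_t\le0$ but the right‑hand side is $\ge d_1\int J_1 z\ge0$ — a contradiction. Hence $z>0$, and letting $\eta\to0$ gives $\underline u\le\bar u$ on that region.

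To close the support ordering on all of $[0,T]$, I would first reduce to the case where the supersolution inequalities in \eqref{compsing1} are strict and the initial supports strictly nested with $\bar u(0,\cdot)>\underline u(0,\cdot)$; the general case then follows by replacing $(\bar u,\bar g,\bar h)$ by a ``fattened'' supersolution — enlarging $[\bar g,\bar h]$ by $\epsilon(1+t)$ on each side and adding $\epsilon e^{\gamma t}$ to $\bar u$ with $\gamma$ large, the quadratic form of $F$ rendering the added term harmless — and letting $\epsilon\downarrow0$. In the strict case, set $t^*:=\sup\{t\le T:[\underline g(s),\underline h(s)]\subset[\bar g(s),\bar h(s)]\ \forall\,s\le t\}$. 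If $t^*<T$, one boundary pair touches at $t^*$, say $\underline h(t^*)=\bar h(t^*)$; by (a), $\underline u(t^*,\cdot)\le\bar u(t^*,\cdot)$ on the overlap and $\underline u\equiv0\le\bar u$ on the rest of $[\bar g(t^*),\bar h(t^*)]$, so comparing the free‑boundary speeds yields $\bar h'(t^*)>\underline h'(t^*)$, whereas $\bar h-\underline h\ge0$ on $[0,t^*]$ with equality at $t^*$ forces $(\bar h-\underline h)'(t^*)\le0$ — a contradiction (symmetrically if $\bar g(t^*)=\underline g(t^*)$). Hence $t^*=T$, and combining with (a) proves (i).

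For (ii), the left boundary is controlled by the hypothesis $\bar g\le\underline g$ and $\underline u$ vanishes at $x=\underline g(t),\underline h(t)$, so only the right free boundary needs the above argument. For (iii), the hypothesis gives $\bar u\ge\underline u$ on $\{\bar g(t)\}\cup[h_1(t),\infty)\supset[\underline h(t),\infty)$, so the maximum principle need only be run on the bounded set $\{(t,x):\bar g(t)\le x\le h_1(t)\}$, and the single free boundary $\bar g$ is handled exactly as $\bar h$ above; boundedness of $\bar u,\underline u$ is what lets this work on the a priori unbounded domain.

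The hard part will be the circularity behind the support‑ordering step: one cannot compare the speeds of $\bar h$ and $\underline h$ without first knowing $\underline u\le\bar u$, and one cannot get $\underline u\le\bar u$ without first knowing the supports are nested. The fattened strict supersolution is the device that breaks this loop, and the only genuinely non‑routine point is to arrange the fattening so that it simultaneously (a) preserves the differential inequality, (b) gives the boundary‑speed inequalities a strict margin, and (c) preserves the initial ordering — all of which the quadratic nonlinearity $F(t,x,s)=s(m-s)$ makes straightforward — and that it collapses to the original data as $\epsilon\to0$.
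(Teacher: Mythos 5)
Your sketch is correct and follows exactly the standard route the paper points to: the paper omits the proof and refers to \cite{Cao-2019-JFA}, whose argument is precisely the combination you describe — a nonlocal parabolic maximum principle on the overlap region (with the $\eta e^{\beta t}$ perturbation to exploit the quadratic $F$), a first-touching-time argument for the moving boundaries, and a ``fattened'' strict supersolution to break the circularity between the density ordering and the support ordering. Nothing essential is missing; the only caveat is that the fattening step, which you correctly identify as the delicate point, needs to specify how $\bar u$ is extended onto the enlarged interval so that both the differential inequality and the strict free-boundary inequality hold there, but this is the routine bookkeeping carried out in \cite{Cao-2019-JFA}.
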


\begin{lemma}[Comparison principle 2]\label{lemmacomp2} Assume that $ \bar g,\,\bar h, \, \underline  g,\,\underline  h\in C([0,T])$ satisfy $\underline g(t)<\underline  h(t)$ and $\bar g(t)<\bar  h(t)$, and have the monotone properties $\bar g'\leq 0$, $\underline g'\leq 0$, $\bar h'\geq  0$ and $\underline h'\geq  0$. Suppose that  
	\begin{align*}
		\bar u\in C([0,T]\times \R)\cap C^{1,0}([0,T]\times [\bar g,\bar h]),\ \underline v\in C([0,T]\times \R)\cap C^{1,0}([0,T]\times [\underline g,\underline h]),
	\end{align*}
	and $\bar u,\, \underline v\in C^{1,0}([0,T]\times \R)$ are nonnegative and bounded. 
	
	{\rm (i)} {\rm \underline {(Two sides free boundaries)}} If  $(\bar u, \underline v, \bar g,\bar h)$   satisfies 	
	\begin{align}\label{comp1}
		\begin{cases}
			\dd	\bar u_t \geq  d_1\int_{\bar g(t)}^{\bar h(t)}J_1(x - y)\bar u(t, y)\rd y - d_1\bar u + \bar u(1 - \bar u - k\underline v),   &
			(t,x) \in  (0,T] \times (\bar g,\bar h),\\[4mm]
			\dd			\underline v_t \leq   d_2\int_\mathbb{R}J_2(x - y)	\underline v(t, y)\rd y - d_2	\underline v + \gamma 	\underline v(1 - 	\underline v - h\bar  u),   &
			(t,x) \in  (0,T] \times \mathbb{R},\\[4mm]
			\bar u(t, x) \geq   0,  &
			t \in [0,T],\ x \not\in (\bar g(t), \bar h(t)),\\
			\dd	\bar h^{\prime} (t) \geq \mu\int_{\bar g(t)}^{\bar h(t)}\int_{\bar h(t)}^{\infty}J_1(x - y)\bar u(t, x)\rd y\rd x,  &
			t \in (0,T],\\[4mm]
			\dd	\bar g^{\prime} (t) \leq  -{\mu}\int_{\bar g(t)}^{\bar h(t)}\int_{-\infty}^{\bar g(t)}J_1(x - y)\bar u(t, x)\rd y\rd x,  &
			t \in (0,T],\\	
			\bar u(0, x) \geq u_0(x),\ \underline v(0, x) \leq  v_0(x),
			&x \in\mathbb{R},
		\end{cases}
	\end{align}
	and $\bar u(0,x)\geq 0$ for $x\in \R$,  then  the unique solution $(u,v,g,h)$ of \eqref{KnK1.2} satisfies
	\begin{align*}
		&[g(t),h(t)]\subset [\bar g(t),\bar h(t)],\ \  t\in [0,T],\\
		&u(t,x)\leq \bar u(t,x),\ \underline v(t,x)\leq  v(t,x),\ \ \ t\in [0,T],\  x\in \R.
	\end{align*}
		If $(\underline  u, \bar v, \underline g,\underline h)$ satisfies  \eqref{comp1} with all the inequality signs reversed,   then  
\begin{align*}
		& [\underline  g(t),\underline h(t)]\subset [g(t),h(t)], \ \  t\in [0,T],\\
		&\underline u(t,x)\leq  u(t,x),\ v(t,x)\leq \bar v(t,x),\ \ \ t\in [0,T],\  x\in \R.
	\end{align*}
	
	{\rm (ii)} {\rm \underline {(Fixed  boundaries)}} Suppose $g(t)<h(t)$, $g'(t)\leq 0\leq h'(t)$, and $A\in C([0,T]\times [g,h]$ is a nonnegative function. Assume that   $\bar u, \underline v$   are nonnegative and satisfy 	
	\begin{align}\label{comp2}
		\begin{cases}
			\dd	\bar u_t \geq  d_1\int_{ g(t)}^{ h(t)}J_1(x - y)\bar u(t, y)\rd y - d_1\bar u + \bar u(1 - \bar u - k\underline v),   &
			(t,x) \in  (0,T] \times (g, h),\\[4mm]
			\dd			\underline v_t \leq   d_2\int_{ g(t)}^{ h(t)}J_2(x - y)	\underline v(t, y)\rd y - d_2	\underline v +A+ \gamma 	\underline v(1 - 	\underline v - h\bar  u),   &
			(t,x) \in  (0,T] \times ( g, h),
		\end{cases}
	\end{align}
	and $\underline  u, \bar v$ are nonnegative and satisfy  \eqref{comp2} with all the inequality signs reversed. If 
	\begin{align}\label{comp3}
		\begin{cases}
			\bar u(t, x) \geq \bar v(t, x)\geq   0,  &
			t \in [0,T],\ x \in\{g(t), h(t)\},\\
			\bar u(0, x) \geq \underline u(0,x),\ \bar v(0, x) \geq  \underline v(0,x),
			&x \in [ g(0), h(0)],
		\end{cases}
	\end{align}
	then 
	\begin{align*}
		\bar u(t, x) \geq \underline u(t,x),\ \bar v(t, x) \geq  \underline v(t,x)\ \mbox{ for }\ t\in [0,T],\ 
		x \in [ g(t), h(t)].
	\end{align*}
	We remark that the condition in  first inequality of \eqref{comp3} can be removed if $ g$ and $ h$ are constants. 
	
	{\rm (iii)} {\rm \underline {(Comparison with the Cauchy problem)}} Let $(u,v,g,h)$ be the solution of \eqref{KnK1.2}. If   $\bar u, \underline v$  are nonnegative and   satisfy 	
	\begin{align*}
		\begin{cases}
			\dd	\bar u_t \geq  d_1\int_{\R}J_1(x - y)\bar u(t, y)\rd y - d_1\bar u + \bar u(1 - \bar u - k\underline v),   &
			(t,x) \in  (0,T] \times \mathbb{R},\\[4mm]
			\dd			\underline v_t \leq   d_2\int_\mathbb{R}J_2(x - y)	\underline v(t, y)\rd y - d_2	\underline v + \gamma 	\underline v(1 - 	\underline v - h\bar  u),   &
			(t,x) \in  (0,T] \times \mathbb{R},\\[4mm]
			\bar u(0, x) \geq u_0(x),\ 0\leq \underline v(0, x) \leq  v_0(x),
			&x \in\mathbb{R},
		\end{cases}
	\end{align*}
	then $u(t,x)\leq \bar u(t,x)$ and $ v(t,x)\geq \underline v(t,x)$ for $t\in [0,T]$ and $x\in \R.$
\end{lemma}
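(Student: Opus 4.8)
The three parts of Lemma~\ref{lemmacomp2} rest on two ingredients: a scalar maximum principle for nonlocal equations on a (possibly moving, possibly unbounded) interval, and the monotonicity of the competition system \eqref{KnK1.2} with respect to the \emph{competitive order} $(u_1,v_1)\preceq(u_2,v_2)\Longleftrightarrow u_1\le u_2,\ v_1\ge v_2$. I would first isolate the scalar ingredient, in the form used in \cite{Cao-2019-JFA}: if $\{D(t)\}_{t\in[0,T]}$ is a continuous family of intervals, $W$ is continuous on $[0,T]\times\overline D$ and $C^1$ in $t$ on the parabolic interior, $W(0,\cdot)\ge0$, $W\ge0$ at every point entering $D(t)$ through the lateral boundary, and
\[
W_t\ \ge\ d\int_{D(t)}J(x-y)W(t,y)\,\rd y-dW+c(t,x)\,W\qquad\text{on }(0,T]\times D(t)
\]
with $c$ bounded and $\int_{\R}J=1$, then $W\ge0$ on $[0,T]\times\overline D$. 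Since the nonlocal operator has no smoothing effect, the proof uses the exponential shift $W_1:=e^{Kt}W$ with $K>d+\|c\|_{\infty}$, for which the zeroth-order coefficient is bounded below by $K-d>0$ after absorbing $\int_{D(t)}J\le1$; one then tracks $\omega(t):=\min_{x\in\overline{D(t)}}W_1(t,x)$, notes $\omega(0)\ge0$, and checks via a Dini-derivative argument that if $\omega$ first dipped below $0$ at some $t_{*}$ then $D^{+}\omega(t)\ge(K-\|c\|_{\infty})\,\omega(t)$ for $t$ slightly beyond $t_{*}$ (evaluate the inequality at a minimizing point and use $W_1(t,\cdot)\ge\omega(t)$ together with $\int_{D(t)}J\le1$), which with $\omega(t_{*})=0$ forces $\omega\ge0$ there, a contradiction; a routine $\delta$-shift $W\mapsto W+\delta$ supplies the strictness needed to launch the argument. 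A cooperative \emph{pair} $(w,z)$ of such inequalities, each variable's equation containing the other with a nonnegative coefficient, is handled the same way by tracking $\min\{w,z\}$.

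\medskip

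Parts~(ii) and~(iii) then follow with no free-boundary issue. For~(iii), extend $u$ by $0$ to $\R$, so that $\int_{g(t)}^{h(t)}J_1(x-y)u(t,y)\,\rd y=\int_{\R}J_1(x-y)u(t,y)\,\rd y$; with $w:=\bar u-u$ and $z:=v-\underline v$, a direct rearrangement of the reaction terms gives, on $\{u>0\}$,
\[
\bar u(1-\bar u-k\underline v)-u(1-u-kv)=(1-\bar u-u-k\underline v)\,w+k\,u\,z,
\]
\[
\gamma v(1-v-hu)-\gamma\underline v(1-\underline v-h\bar u)=\gamma(1-v-\underline v-hu)\,z+\gamma\,h\,\underline v\,w,
\]
while on $\{u=0\}$ one has $w=\bar u\ge0$ and the inequality for $\bar u$ alone controls $w$. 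Since $u\ge0$ and $\underline v\ge0$, the off-diagonal coefficients $ku$ and $\gamma h\underline v$ are nonnegative, so $(w,z)$ is a cooperative pair of nonlocal inequalities with bounded coefficients, $w(0,\cdot)\ge0$, $z(0,\cdot)\ge0$, and $D(t)=\R$ has no lateral boundary; the scalar principle gives $w,z\ge0$, i.e.\ $u\le\bar u$ and $v\ge\underline v$. Part~(ii) is the identical computation with $(w,z):=(\bar u-\underline u,\ \bar v-\underline v)$ --- the source $A\ge0$ sits in the equations of both $\underline v$ and $\bar v$ in \eqref{comp2} and cancels in $z_t$ --- on the (nondecreasing, since $g'\le0\le h'$) interval $[g(t),h(t)]$; here the hypothesis $\bar u\ge\bar v\ge0$ on $\{g(t),h(t)\}$ in \eqref{comp3} is precisely what controls $w$ and $z$ at points newly entering through the moving boundary, and is vacuous when $g,h$ are constant.

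\medskip

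The substance of the lemma is Part~(i), where the free intervals must be shown to stay nested; I would follow the perturbation scheme of \cite{Cao-2019-JFA}. Given the supersolution $(\bar u,\underline v,\bar g,\bar h)$ of \eqref{comp1}, for small $\epsilon>0$ build from it a \emph{strict} supersolution $(\bar u^{\epsilon},\underline v,\bar g^{\epsilon},\bar h^{\epsilon})$ by slightly enlarging the free interval and correspondingly modifying $\bar u$ upward, so that the interior inequality, both free-boundary velocity inequalities and the initial ordering all become strict. Set
\[
\tau:=\sup\big\{t\in[0,T]:\ [g(s),h(s)]\subset(\bar g^{\epsilon}(s),\bar h^{\epsilon}(s)),\ u(s,\cdot)\le\bar u^{\epsilon}(s,\cdot),\ v(s,\cdot)\ge\underline v(s,\cdot)\ \text{for all }s\in[0,t]\big\}.
\]
On $[0,\tau]$ the inclusion $[g(t),h(t)]\subset[\bar g^{\epsilon}(t),\bar h^{\epsilon}(t)]$ together with $\bar u^{\epsilon}\ge0$ gives $\int_{g(t)}^{h(t)}J_1(x-y)\bar u^{\epsilon}\le\int_{\bar g^{\epsilon}(t)}^{\bar h^{\epsilon}(t)}J_1(x-y)\bar u^{\epsilon}$, so the rearrangement above again places $(w,z)=(\bar u^{\epsilon}-u,\ v-\underline v)$ into the scalar framework and yields $u\le\bar u^{\epsilon}$, $v\ge\underline v$ on $[0,\tau]$. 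If $\tau<T$, then $h(\tau)=\bar h^{\epsilon}(\tau)$ or $g(\tau)=\bar g^{\epsilon}(\tau)$; but, using $u(\tau,\cdot)\le\bar u^{\epsilon}(\tau,\cdot)$, $[g(\tau),h(\tau)]\subset[\bar g^{\epsilon}(\tau),\bar h^{\epsilon}(\tau)]$ and $h(\tau)=\bar h^{\epsilon}(\tau)$,
\[
h'(\tau)=\mu\int_{g(\tau)}^{h(\tau)}\int_{h(\tau)}^{\infty}J_1(x-y)u(\tau,x)\,\rd y\,\rd x\ <\ \mu\int_{\bar g^{\epsilon}(\tau)}^{\bar h^{\epsilon}(\tau)}\int_{\bar h^{\epsilon}(\tau)}^{\infty}J_1(x-y)\bar u^{\epsilon}(\tau,x)\,\rd y\,\rd x\ \le\ (\bar h^{\epsilon})'(\tau),
\]
the strict inequality coming from the $\epsilon$-modification, and similarly $g'(\tau)>(\bar g^{\epsilon})'(\tau)$; together with the strict interior inequalities this contradicts the maximality of $\tau$. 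Hence $\tau=T$, and letting $\epsilon\downarrow0$ gives $[g(t),h(t)]\subset[\bar g(t),\bar h(t)]$, $u\le\bar u$, $v\ge\underline v$ on $[0,T]$; the claim for the subsolution $(\underline u,\bar v,\underline g,\underline h)$ is the mirror argument, shrinking its interval instead. The genuine difficulty here is the bookkeeping in the nonlocal setting --- ensuring the integral terms compare in the right direction when the two domains differ, and keeping track of points entering moving domains --- rather than any isolated estimate; once the domains are correctly nested, everything reduces to the scalar nonlocal maximum principle of the first paragraph.
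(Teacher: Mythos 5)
The paper does not actually prove Lemma~\ref{lemmacomp2}; it states that the proofs ``are easily obtained by following the proof in \cite{Cao-2019-JFA}, and are omitted here.'' Your proposal reproduces precisely the scheme of that reference: a scalar nonlocal maximum principle on a moving interval as the workhorse; the cooperative rewrite $(w,z)=(\bar u-u,\,v-\underline v)$, with nonnegative off-diagonal coefficients $ku$ and $\gamma h\underline v$, to turn the competitive coupling into a cooperative one; and, for part~(i), the $\epsilon$--perturbed strict supersolution combined with a first-touching-time argument that forces the free intervals to stay nested. This is the intended route, not a genuinely different one, and the overall argument is sound.

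Two places should be tightened. First, the exponential transform $W_1=e^{Kt}W$ as you describe it does not on its own close the Dini-derivative argument: at a minimizing point, using $\int_{D(t)}J\leq 1$ when $\omega<0$, one gets $D^{+}\omega\geq(K+c)\,\omega$, whose right-hand side is still negative where $\omega<0$, no matter how large $K$ is. What finishes the argument is the Gronwall step you gesture at (equivalently, monotonicity of $e^{-Kt}\omega$), or, as in \cite{Cao-2019-JFA}, working with the strictly positive perturbation $W+\delta e^{Bt}$ and letting $\delta\downarrow 0$; the $e^{Kt}$ rescaling by itself is neither necessary nor sufficient. Second, you read \eqref{comp3} literally as ``$\bar u\geq\bar v\geq 0$ on $\{g(t),h(t)\}$''; that condition controls neither $w=\bar u-\underline u$ nor $z=\bar v-\underline v$ at points entering through the moving boundary. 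It is almost certainly a typo in the statement for $\bar u\geq\underline u\geq 0$ and $\bar v\geq\underline v\geq 0$ on $\{g(t),h(t)\}$, and your part~(ii) argument in fact relies on this corrected hypothesis, so you should say so rather than quote the flawed one.
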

\begin{corollary}\label{coro4.4}
	The solution $u$ is nondecreasing  with respect to $\mu$, $h$ and $u_0$, while nonincreasing  with respect to  $k$ and $v_0$.
\end{corollary}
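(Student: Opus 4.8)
The plan is to obtain all five monotonicity assertions at once from a single application of Comparison principle~2 (Lemma~\ref{lemmacomp2}(i)). First I would set up the comparison: fix every ingredient of \eqref{KnK1.2} except possibly $\mu$, $h$, $k$, $u_0$, $v_0$, and for $i=1,2$ let $(u_i,v_i,g_i,h_i)$ be the solution associated with $\mu_i$, $h^{(i)}$, $k_i$ and initial data $(u_{0,i},v_{0,i})$, where
\[
\mu_1\le\mu_2,\qquad h^{(1)}\le h^{(2)},\qquad k_1\ge k_2,\qquad u_{0,1}\le u_{0,2},\qquad v_{0,1}\ge v_{0,2},
\]
and the common $h_0$ is kept, so $h_i(0)=-g_i(0)=h_0$. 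The goal of this step is to check that $(u_2,v_2,g_2,h_2)$ is an upper solution, in the sense of \eqref{comp1}, for \eqref{KnK1.2} with the ``smaller'' parameters $(\mu_1,h^{(1)},k_1)$ and initial data $(u_{0,1},v_{0,1})$, whose solution is exactly $(u_1,v_1,g_1,h_1)$.

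Carrying out that verification is routine. Since $g_2'\le 0\le h_2'$ holds for any solution of \eqref{KnK1.2}, the monotonicity requirements on $\bar g,\bar h$ are automatic. In the $u_2$-equation, $k_2\le k_1$ and $v_2\ge 0$ give $u_2(1-u_2-k_2 v_2)\ge u_2(1-u_2-k_1 v_2)$, which is the first inequality of \eqref{comp1} with $\underline v=v_2$; in the $v_2$-equation, $h^{(1)}\le h^{(2)}$ and $u_2\ge 0$ give $\gamma v_2(1-v_2-h^{(2)} u_2)\le \gamma v_2(1-v_2-h^{(1)} u_2)$, which is the second inequality of \eqref{comp1} with $\bar u=u_2$; and $\mu_1\le\mu_2$ turns the free boundary identities for $h_2,g_2$ into the needed inequalities for $\bar h',\bar g'$. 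Together with $u_2(0,\cdot)=u_{0,2}\ge u_{0,1}$, $v_2(0,\cdot)=v_{0,2}\le v_{0,1}$ and $h_2(0)=h_1(0)=h_0$, this places us in the hypotheses of Lemma~\ref{lemmacomp2}(i), which yields for every $T>0$, hence for all $t\ge 0$,
\[
[g_1(t),h_1(t)]\subset[g_2(t),h_2(t)],\qquad u_1(t,x)\le u_2(t,x),\qquad v_1(t,x)\ge v_2(t,x).
\]

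Finally I would specialise: varying $\mu$ alone (all else equal) shows $u$ is nondecreasing in $\mu$, and likewise for $h$ and for $u_0$, while varying $k$ alone, or $v_0$ alone, shows $u$ is nonincreasing in $k$ and in $v_0$. I do not expect a genuine obstacle in this argument; the only point demanding care is the bookkeeping of signs — $k$ appears with a minus sign in the $u$-equation and $h$ with a minus sign in the $v$-equation — so that the solution with the ``larger'' parameters really is an upper solution for the $u$-component (and a lower solution for the $v$-component) of the problem with the ``smaller'' parameters.
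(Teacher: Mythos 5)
Your proposal is correct and is exactly the argument the paper intends: the corollary is stated immediately after Lemma \ref{lemmacomp2} with no separate proof, and the implicit argument is precisely an application of Lemma \ref{lemmacomp2}(i), taking the solution with the ``larger'' parameters $(\mu_2,h^{(2)},k_2,u_{0,2},v_{0,2})$ (with $k_2\le k_1$ and $v_{0,2}\le v_{0,1}$) as the upper solution for the problem with the ``smaller'' ones. Your sign bookkeeping in the reaction terms and in the free-boundary equations, and the specialization to one parameter at a time, are the same verification.
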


\end{document}